\numberwithin{equation}{section}
\DeclareMathOperator{\E}{\mathbb{E}}
\DeclareMathOperator{\diag}{diag}
\renewcommand{\Pr}[2][]{\mathbb{P}_{#1} \left\{ #2 \rule{0mm}{3mm}\right\}}
\def \CC {\mathcal{C}}
\def \EE {\mathcal{E}}
\def \LL {\mathcal{L}}
\def \NN {\mathcal{N}}
\def \RR {\mathcal{R}}
\def \a {\alpha}
\def \e {\varepsilon}
\def \d {\delta}
\def \l {\lambda}
\def \tran {\mathsf{T}}
\def \one {{\textbf 1}}
\newtheorem{theorem}{Theorem}[section]
\newtheorem{corollary}[theorem]{Corollary}
\newtheorem{lemma}[theorem]{Lemma}
\theoremstyle{remark}
\newtheorem{remark}[theorem]{Remark}
\title[]{Concentration and regularization of random graphs}
\author{Can M. Le, Elizaveta Levina \and Roman Vershynin}
\address{Department of Statistics, University of California, Davis, One Shields Ave, Davis, CA 95616, U.S.A.}
\email{canle@ucdavis.edu}
\address{Department of Statistics, University of Michigan, 1085 S. University Ave, Ann Arbor, MI 48109, U.S.A.}
\email{elevina@umich.edu}
\address{Department of Mathematics, University of Michigan, 530 Church St, Ann Arbor, MI 48109, U.S.A.}
\email{romanv@umich.edu}
\thanks{E. L. is partially supported by NSF grants DMS-1159005 and DMS-1521551.   R. V. is partially supported by NSF grant 1265782 and U.S. Air Force grant FA9550-14-1-0009. This work was done while C. L. was a Ph.D. student at the University of Michigan.}
\date{\today}
\begin{document}


\begin{abstract}
  This paper studies how close random graphs are typically to their expectations.
  We interpret this question through the concentration of the adjacency and Laplacian matrices
  in the spectral norm.
  We study inhomogeneous Erd\"os-R\'enyi random graphs on $n$ vertices, where edges form
  independently and possibly with different probabilities $p_{ij}$.
  Sparse random graphs whose expected degrees are $o(\log n)$ fail to concentrate;
  the obstruction is caused by vertices with abnormally high and low degrees.
  We show that concentration can be restored if we regularize the degrees of such vertices,
  and one can do this in various ways.
  As an example, let us reweight or remove enough edges to make all degrees bounded above by $O(d)$ where
  $d=\max np_{ij}$.
  Then we show that the resulting adjacency matrix $A'$ concentrates with the optimal rate:
  $\|A' - \E A\| = O(\sqrt{d})$.
  Similarly, if we make all degrees bounded below by $d$ by adding weight $d/n$ to all edges,
  then the resulting Laplacian concentrates with the optimal rate: $\|\LL(A') - \LL(\E A')\| = O(1/\sqrt{d})$.
  Our approach is based on Grothendieck-Pietsch factorization, using which we construct a new
  decomposition of random graphs.
  We illustrate the concentration results with an application to the community detection problem in
  the analysis of networks.
\end{abstract}

\maketitle


\section{Introduction}

Many classical and modern results in probability theory, starting from the Law of Large Numbers,
can be expressed as concentration of random objects about their expectations.
The objects studied most are
sums of independent random variables, martingales, nice functions on product probability spaces and
metric measure spaces.
For a panoramic exposition of concentration phenomena in modern probability theory and related fields,
the reader is referred to the books \cite{ledoux2001,Boucheron&Lugosi&Massart2013}.

This paper studies concentration properties of random graphs.
The first step of such study should be to decide how to interpret the statement that a
{\em random graph $G$ concentrates near its expectation}. To do this, it will be useful to look at the
graph $G$ through the lens of the matrices classically associated with $G$,
namely the adjacency and Laplacian matrices.

Let us first build the theory for the adjacency matrix $A$; the Laplacian will be discussed in
Section~\ref{s: Laplacian}. We may say that $G$ concentrates
about its expectation if $A$ is close to its expectation $\E A$ in some natural matrix norm;
we interpret the expectation of $G$ as the weighted graph with adjacency matrix $\E A$.
Various matrix norms could be of interest here. In this paper, we study concentration in the spectral norm.
This automatically gives us a tight control of all eigenvalues and eigenvectors, according to
Weyl's and Davis-Kahan perturbation inequalities (see \cite[Sections III.2 and VII.3]{Bhatia1996}).		

Concentration of random graphs interpreted this way, and also of general random matrices,
has been studied in several communities,
in particular in random matrix theory, combinatorics and network science.

We will study random graphs generated from an
{\em inhomogeneous Erd\"os-R\'enyi model} $G(n, (p_{ij}))$,
where edges are formed independently with given probabilities $p_{ij}$, see \cite{Bollobas2007}.
This is a generalization of the classical Erd\"os-R\'enyi model $G(n,p)$ where all edge probabilities
$p_{ij}$ equal $p$. Many popular graph models arise as special cases of $G(n, (p_{ij}))$,
such as the stochastic block model, a benchmark model in the analysis of networks
\cite{Holland83} discussed in Section~\ref{s: networks}, and random subgraphs of given graphs.

Often, the question of interest is estimating some features of the probability matrix $(p_{ij})$
from random graphs drawn from $G(n, (p_{ij}))$.   Concentration of
adjacency matrix and Laplacian matrix around their expectations, when
it holds,
guarantees that such features can be recovered.    As an example of
this use of our concentration results, we will show that if $(p_{ij})$
has a block structure, the blocks can be accurately estimated from a single realization of $G(n, (p_{ij}))$ even when the average vertex degree is bounded.

\subsection{Dense graphs concentrate}				\label{s: dense}

The cleanest concentration results are available for the classical Erd\"os-R\'enyi model $G(n,p)$
in the {\em dense} regime. In terms of the expected degree $d = pn$, we have with high probability that
\begin{equation}         \label{eq: concentration dense ER}
\|A - \E A\| = 2 \sqrt{d} \, (1+o(1)) \quad \text{if} \quad d \gg \log^4 n,
\end{equation}
see \cite{FurKom80, Vu2007, Lu&Peng2013}. Since $\|\E A\| = d$, we see that the typical deviation
here behaves like the square root of the
magnitude of expectation -- just like in many other classical results of probability theory.
In other words, {\em dense random graphs concentrate well}.

The lower bound on density in \eqref{eq: concentration dense ER} can be essentially relaxed all the way down
to $d = \Omega(\log n)$. Thus, with high probability we have
\begin{equation}         \label{eq: concentration dense}
\|A - \E A\| = O(\sqrt{d}) \quad \text{if} \quad d = \Omega(\log n).
\end{equation}
This result was proved in \cite{FeiOfe05} based on the method developed by J.~Kahn and E.~Szemeredi \cite{Friedman&Kahn&Szemeredi1989}.
More generally, \eqref{eq: concentration dense} holds for any inhomogeneous Erd\"os-R\'enyi model $G(n, (p_{ij}))$
with maximal expected degree $d = \max_i \sum_j p_{ij}$. This generalization
can be deduced from a recent result of S.~Bandeira and R.~van~Handel \cite[Corollary~3.6]{Bandeira&Handel2014},
while a weaker bound $O(\sqrt{d \log n})$ follows from
concentration inequalities for sums of independent random matrices \cite{Oliveira2010}.
Alternatively, an argument in \cite{FeiOfe05} can be used to prove \eqref{eq: concentration dense}
for a somewhat larger but still useful value
\begin{equation}         \label{eq: d}
d = \max_{ij} n p_{ij} ,
\end{equation}
see \cite{Lei&Rinaldo2013, Chin&Rao&Vu2015}.
The same can be obtained by using Seginer's bound on random matrices \cite{Hajek&Wu&Xu2014}.
As we will see shortly, our paper provides an alternative and completely different approach to
general concentration results like \eqref{eq: concentration dense}.

\subsection{Sparse graphs do not concentrate}

In the {\em sparse} regime, where the expected degree $d$ is bounded, concentration breaks down.
According to \cite{Krivelevich&Sudakov2003}, a random graph from $G(n,p)$ satisfies with high probability that
\begin{equation}         \label{eq: no-concentration sparse ER}
\|A\| = (1+o(1)) \sqrt{d(A)} = (1+o(1)) \sqrt{\frac{\log n}{\log \log n}}
\quad \text{if} \quad d = O(1),
\end{equation}
where $d(A)$ denotes the maximal degree of the graph (a random quantity).
So in this regime we have $\|A\| \gg \|\E A\| = d$, which shows that {\em sparse random graphs do not concentrate}.

\medskip

What exactly makes the norm $A$ abnormally large in the sparse regime? The answer is,
the vertices with too high degrees.
In the dense case where $d \gg \log n$,
all vertices typically have approximately the same degrees $(1+o(1))d$.
This no longer happens in the sparser regime $d \ll \log n$;
the degrees do not cluster tightly about the same value anymore.
There are vertices with too high degrees; they are captured by the second
inequality in \eqref{eq: no-concentration sparse ER}.
Even a single high-degree vertex can blow up the norm of the adjacency matrix.
Indeed, since the norm of $A$ is bounded below by the Euclidean norm of each of its rows,
we have $\|A\| \ge \sqrt{d(A)}$.

\subsection{Regularization enforces concentration}

If high-degree vertices destroy concentration, can we ``tame'' these vertices?
One proposal would be to remove these vertices from the graph altogether.
U.~Feige and E.~Ofek \cite{FeiOfe05} showed that this works for $G(n,p)$ --
{\em the removal of the high degree vertices enforces concentration}.
Indeed, if we drop all vertices with degrees, say, larger than $2d$, the
the remaining part of the graph satisfies
\begin{equation}							\label{eq: Feige-Ofek}
\|A' - \E A'\| = O(\sqrt{d})
\end{equation}
with high probability, where $A'$ denotes the adjacency matrix of the new graph.
The argument in \cite{FeiOfe05} is based on the method developed by J.~Kahn and E.~Szemeredi \cite{Friedman&Kahn&Szemeredi1989}. It extends to the inhomogeneous Erd\"os-R\'enyi model $G(n, (p_{ij}))$
with $d$ defined in \eqref{eq: d}, see \cite{Lei&Rinaldo2013, Chin&Rao&Vu2015}.
As we will see, our paper provides an alternative and completely different approach to such results.

Although the removal of high degree vertices solves the concentration problem,
such solution is not ideal, since those vertices are in some sense the most important ones.
In real-world networks, the vertices with highest degrees are ``hubs'' that hold the network together.
Their removal would cause the network to break down into disconnected components, which leads
to a considerable loss of structural information.

Would it be possible to regularize the graph in a more gentle way -- instead of removing
the high-degree vertices, reduce the weights of their edges just enough to keep the degrees
bounded by $O(d)$? The main result of our paper states that this is true.
Let us first state this result informally; Theorem~\ref{thm: main formal} provides a more general
and formal statement.

\begin{theorem}[Concentration of regularized adjacency matrices]  \label{thm: main informal}
  Consider a random graph from the inhomogeneous Erd\"os-R\'enyi model $G(n, (p_{ij}))$,
  and let $d = \max_{ij} n p_{ij}$.
  For all high degree vertices of the graph (say, those with degrees larger than $2d$),
  reduce the weights of the edges incident to them in an arbitrary way, but so that all degrees
  of the new (weighted) graph become bounded by $2d$.
  Then, with high probability, the adjacency matrix $A'$ of the new graph concentrates:
  $$
  \|A' - \E A\| = O(\sqrt{d}).
  $$
  Moreover, instead of requiring that the degrees become bounded by $2d$,
  we can require that the $\ell_2$ norms of the rows of the new adjacency matrix
  become bounded by $\sqrt{2d}$.
\end{theorem}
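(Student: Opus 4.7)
The plan is to estimate $\|A' - \E A\|$ by duality,
\[
\|A' - \E A\| = \sup_{x,y \in S^{n-1}} \ip{(A' - \E A) x}{y},
\]
and to split the bilinear form into a ``light'' and a ``heavy'' part handled separately. This follows the general outline of Feige--Ofek, with the essential difference that the heavy part is to be controlled by Grothendieck--Pietsch factorization rather than a Kahn--Szemer\'edi discrepancy argument. I would first discretize $S^{n-1}$ by a $\delta$-net of cardinality $e^{O(n)}$, fix a net pair $(x,y)$, and partition $[n]\times[n] = L(x,y) \cup H(x,y)$ according to whether $|x_i y_j| \le \sqrt{d}/n$ or not.

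On the light part, the regularization modifies only entries incident to high-degree vertices, so up to a negligible correction the sum $\sum_{(i,j)\in L}(A'_{ij}-p_{ij}) x_i y_j$ is a sum of independent centered variables of individual magnitudes $\lesssim \sqrt{d}/n$ and total variance $\lesssim d/n$. Bennett's inequality then gives an $O(\sqrt{d})$ tail bound with probability $1-e^{-cn}$, which survives the union bound over the net.

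On the heavy part the contribution is dominated by rows $i$ (symmetrically columns $j$) on which $|x_i|$ is atypically large. The regularization provides $\|A'_i\|_2 \le \sqrt{2d}$ uniformly in $i$, but the naive estimate $\sum_i |x_i|\,\|A'_i\|_2 \le \sqrt{2d}\,\|x\|_1 \lesssim \sqrt{dn}$ loses a factor of $\sqrt{n}$. This is where Grothendieck--Pietsch factorization is needed: applied to $A'-\E A$ viewed as an operator through an $\ell_\infty$-type space, it produces a probability measure $\pi$ on the rows such that the action on $x$ is controlled by $\|x\|_{L^2(\pi)}$ rather than by $\|x\|_1$. Combined with the uniform row-norm bound, this upgrades the heavy-part estimate to $O(\sqrt{d})$.

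The main obstacle is ensuring that a single decomposition (that is, a single measure $\pi$) is good for \emph{all} net pairs $(x,y)$ with failure probability at most $e^{-cn}$. This demands combining the deterministic, functional-analytic factorization step with the probabilistic structure of the inhomogeneous Erd\H{o}s--R\'enyi graph: most rows of $A$ are already well concentrated, while the exceptional few that would derail the bound are precisely those neutralized by the regularization hypothesis, and $\pi$ should capture this dichotomy. Once the simultaneous decomposition is in hand, adding the light-part Bennett bound to the heavy-part Grothendieck--Pietsch bound yields $\|A' - \E A\| = O(\sqrt{d})$ with high probability.
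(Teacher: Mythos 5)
The proposal follows the Kahn--Szemer\'edi/Feige--Ofek template (net over the sphere, split into light and heavy couples), with Grothendieck--Pietsch grafted onto the heavy part. The paper takes a genuinely different route: it never uses an $\epsilon$-net. Instead it proves concentration in the $\ell_\infty\to\ell_2$ norm on a large block, upgrades that to spectral-norm concentration on a sub-block via Grothendieck--Pietsch, and then iterates on the exceptional block, ultimately decomposing $[n]\times[n]$ into a core $\NN$ where $A-\E A$ concentrates and residual classes $\RR,\CC$ with $O(r)$ ones per row/column, which are controlled deterministically by a cheap $\ell_1\times\ell_1\Rightarrow\ell_2$ norm bound (Lemma~\ref{lem: norm bound with L1 condition}). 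So the Grothendieck--Pietsch step lives on the \emph{good} part of the matrix, not the heavy part, and the regularization hypothesis enters only at the very end, deterministically, through the row/column $\ell_1$-norm bound for $A'_\RR,A'_\CC$.

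There is a concrete gap in your proposal, and it is the very one the paper flags explicitly as the reason it abandons the net/light-couples method. On the light part you write that $\sum_{(i,j)\in L}(A'_{ij}-p_{ij})x_iy_j$ is ``a sum of independent centered variables'' up to a negligible correction. This is false: the entries $A'_{ij}$ are not independent, because the reweighting applied to the edges incident to a high-degree vertex depends on the degrees of the original graph, hence on all the entries of $A$. Worse, Theorem~\ref{thm: main formal} is stated so that the reweighting of edges incident to the chosen $O(n/d)$ vertices can be completely \emph{arbitrary} (even adversarial, chosen after seeing $A$), and the bound must hold simultaneously for all such $A'$. A Bennett/Bernstein tail for a fixed $(x,y)$ simply does not apply to this quantity, and the ``negligible correction'' is not negligible once the adversary may zero out or partially retain any subset of entries in those rows and columns. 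This is exactly the obstacle the paper points to when it notes that ``the contribution of the so-called light couples becomes hard to control when one changes, and even reduces, the individual entries of $A$.'' Your heavy-part plan is also incomplete as stated: Grothendieck--Pietsch relates $\|B\|$ to $\|B\|_{\infty\to 2}$ for a single matrix, and you acknowledge but do not resolve the requirement that one measure $\pi$ (equivalently one factorization) work uniformly over the exponentially many net pairs. The paper sidesteps both problems by making the decomposition $\NN\cup\RR\cup\CC$ independent of $x,y$ and independent of the chosen reweighting: concentration on $\NN$ is proved for $A$ itself (not $A'$), and the effect of reweighting is absorbed entirely into the deterministic $\ell_1$-norm estimates on $\RR$ and $\CC$. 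If you want to salvage the net approach you would at minimum need to reformulate the light-part estimate for $A$ rather than $A'$ and then account separately and deterministically for the $A-A'$ difference, which is essentially what the paper's decomposition does without a net.
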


\subsection{Examples of graph regularization}			\label{s: partial cases}

The regularization procedure in Theorem~\ref{thm: main informal} is very flexible. Depending
on how one chooses the weights, one can obtain as partial cases several results we
summarized earlier, as well as some new ones.

\begin{enumerate}[1.]

\item {\em Do not do anything to the graph.}
In the dense regime where $d = \Omega(\log n)$, all degrees are already bounded
by $2d$ with high probability. This means that the original graph satisfies $\|A - \E A\| = O(\sqrt{d})$.
Thus we recover the result of U.~Feige and E.~Ofek \eqref{eq: concentration dense},
which states that {\em dense random graphs concentrate well}.

\item {\em Remove all high degree vertices.}
If we remove all vertices with degrees larger than $2d$, we recover
another result of U.~Feige and E.~Ofek \eqref{eq: Feige-Ofek},
which states that {\em the removal of the high degree vertices enforces concentration}.

\item {\em Remove just enough edges from high-degree vertices.}
Instead of removing the high-degree vertices with all of their edges,
we can remove just enough edges to make all degrees bounded by $2d$.
This milder regularization still produces the concentration bound \eqref{eq: Feige-Ofek}.

\item \label{item: excess} {\em Reduce the weight of edges proportionally to the excess of degrees.}
Instead of removing edges, we can reduce the weight of the existing edges,
a procedure which better preserves the structure of the graph. For instance,
we can assign weight $\sqrt{\l_i \l_j}$ to the edge between vertices $i$ and $j$,
choosing $\l_i := \min(2d/d_i, 1)$ where $d_i$ is the degree of vertex $i$.
One can check that this makes the $\ell_2$ norms of all rows of the adjacency matrix bounded by $2d$.
By Theorem~\ref{thm: main informal}, such regularization procedure leads to
the same concentration bound \eqref{eq: Feige-Ofek}.

\end{enumerate}

\subsection{Concentration of Laplacian}						\label{s: Laplacian}

So far, we have looked at random graphs through the lens of their adjacency matrices.
A different matrix that captures the geometry of a graph is the (symmmetric, normalized) Laplacian matrix,
defined as
\begin{equation}         \label{eq: Laplacian}
\LL(A) = D^{-1/2} (D - A) D^{-1/2} = I - D^{-1/2} A D^{-1/2}.
\end{equation}
Here $I$ is the identity matrix and $D = \diag(d_i)$ is the
diagonal matrix with degrees $d_i = \sum_{j=1}^n A_{ij}$ on the diagonal.
The reader is referred to \cite{ChungFan1997} for an introduction to graph Laplacians
and their role in spectral graph theory. Here we mention just two basic facts:
the spectrum of $\LL(A)$
is a subset of $[0,2]$, and the smallest eigenvalue is always zero.

Concentration of Laplacians of random graphs has  been studied in
\cite{Oliveira2010,Chaudhuri&Chung&Tsiatas2012,Qin&Rohe2013,Joseph&Yu2013,Gao&Ma&Zhang&Zhou2015}.
Just like the adjacency matrix, the Laplacian is known to concentrate in the dense regime where $d = \Omega(\log n)$,
and it fails to concentrate in the sparse regime. However, the obstructions to concentration are opposite.
For the adjacency matrices, as we mentioned, the trouble is caused by high-degree vertices. For the Laplacian,
the problem lies with {\em low-degree vertices}. In particular, for $d = o(\log n)$ the graph is likely to have
isolated vertices; they produce multiple zero eigenvalues of $\LL(A)$, which are easily seen to
destroy the concentration.

In analogy to our discussion of adjacency matrices,
we can try to regularize the graph to ``tame'' the low-degree vertices in various ways,
for example remove the low-degree vertices, connect them to some other vertices,
artificially increase the degrees $d_i$ in the definition \eqref{eq: Laplacian}
of Laplacian, and so on. Here we will focus on the following simple way of regularization proposed in
\cite{amini2013pseudo} and analyzed in \cite{Joseph&Yu2013,Gao&Ma&Zhang&Zhou2015}.
Choose $\tau > 0$ and add the same number $\tau/n$ to all
entries of the adjacency matrix $A$, thereby replacing it with
$$
A_\tau := A + (\tau/n) \one \one^\tran
$$
in the definition \eqref{eq: Laplacian} of the Laplacian.
This regularization raises all degrees $d_i$ to $d_i + \tau$. If we choose $\tau \sim d$,
the regularized graph does not have low-degree vertices anymore.

The following consequence of Theorem~\ref{thm: main informal} shows
that such regularization indeed forces Laplacian to concentrate. Here we state this result informally;
Theorem~\ref{thm: Laplacian formal} provides a more formal statement.

\begin{theorem}[Concentration of the regularized Laplacian]  \label{thm: Laplacian informal}
  Consider a random graph from the inhomogeneous Erd\"os-R\'enyi model $G(n,(p_{ij}))$,
  and let $d = \max_{ij} n p_{ij}$.
  Choose a number $\tau \sim d$.
  Then, with high probability, the regularized Laplacian $\LL(A_\tau)$ concentrates:
  $$
  \|\LL(A_\tau) - \LL(\E A_\tau)\| = O \Big( \frac{1}{\sqrt{d}} \Big).
  $$
\end{theorem}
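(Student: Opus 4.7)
The plan is to reduce the theorem to Theorem~\ref{thm: main informal} via an algebraic identity that absorbs the random normalization $D_\tau^{-1/2}$ into a reweighted version of $A$. Introduce the diagonal matrix $R := \bar D_\tau^{1/2} D_\tau^{-1/2}$ whose entries are $r_i = \sqrt{(\bar d_i + \tau)/(d_i + \tau)}$, and consider the reweighted adjacency matrix $A' := R A R$. Since all the matrices involved are diagonal, $R$ commutes with $\bar D_\tau^{\pm 1/2}$, giving the key identity $D_\tau^{-1/2} A D_\tau^{-1/2} = \bar D_\tau^{-1/2} A' \bar D_\tau^{-1/2}$. Using $A_\tau - A = (\tau/n)\one\one^\tran$ and writing $a := \bar D_\tau^{-1/2}\one$, $b := D_\tau^{-1/2}\one$, this yields
$$\LL(A_\tau) - \LL(\bar A_\tau) = \bar D_\tau^{-1/2}(\bar A - A')\bar D_\tau^{-1/2} + \frac{\tau}{n}(a a^\tran - b b^\tran).$$
It suffices to bound each term by $O(1/\sqrt{d})$.

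For the main term, I would apply Theorem~\ref{thm: main informal} to $A'$. Since $\bar d_i \leq d$ and $\tau \sim d$, the entries of $R$ satisfy $r_i^2 \leq (d + \tau)/\tau = O(1)$. A short calculation then gives the row norm bound
$$\sum_j (A'_{ij})^2 = r_i^2 \sum_{j \sim i} r_j^2 \leq C\, r_i^2 d_i = C\,\frac{d_i(\bar d_i + \tau)}{d_i + \tau} \leq C(\bar d_i + \tau) = O(d),$$
so every row of $A'$ has $\ell_2$ norm at most $O(\sqrt{d})$. The ``moreover'' part of Theorem~\ref{thm: main informal} therefore yields $\|A' - \bar A\| = O(\sqrt{d})$ with high probability. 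Combining with $\|\bar D_\tau^{-1/2}\|^2 \leq 1/\tau = O(1/d)$ controls the main term by $O(1/d)\cdot O(\sqrt{d}) = O(1/\sqrt{d})$.

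For the rank-one correction, I would use $\|a a^\tran - b b^\tran\| \leq \|a - b\|(\|a\| + \|b\|)$. Both $\|a\|^2 = \sum_i 1/(\bar d_i + \tau)$ and $\|b\|^2 = \sum_i 1/(d_i + \tau)$ are $O(n/d)$. For $\|a - b\|$, the Lipschitz property of $x \mapsto 1/\sqrt{x}$ on $[\tau,\infty)$ gives $(a_i - b_i)^2 \leq (d_i - \bar d_i)^2/(4\tau^3)$; a concentration argument for the quadratic degree deviation $\sum_i(d_i - \bar d_i)^2$ (whose expectation is $\sum_i \text{Var}(d_i) \leq nd$) delivers $\sum_i (d_i - \bar d_i)^2 = O(nd)$ with high probability, and hence $\|a - b\|^2 = O(n/d^2)$. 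The correction is thus at most $(\tau/n) \cdot O(\sqrt{n}/d) \cdot O(\sqrt{n/d}) = O(\tau/d^{3/2}) = O(1/\sqrt{d})$.

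The main obstacle is ensuring that Theorem~\ref{thm: main informal} truly applies to $A' = RAR$: although its rows are $\ell_2$-bounded by $O(\sqrt{d})$, its entries $r_i r_j A_{ij}$ can slightly exceed $A_{ij}$ when both $d_i < \bar d_i$ and $d_j < \bar d_j$, so $A'$ is not literally a weight-\emph{reduction} of $A$ in the informal sense. Verifying that the formal version of Theorem~\ref{thm: main informal}, built on the Grothendieck--Pietsch factorization advertised in the abstract, accommodates arbitrary symmetric matrices meeting only the row $\ell_2$ constraint is the most delicate point; once this is in place, the rest is algebraic manipulation together with a standard concentration bound on $\sum_i (d_i - \bar d_i)^2$.
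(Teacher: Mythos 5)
Your algebraic identity is clean and correct, and your instinct about the ``most delicate point'' is exactly right --- but the answer to your own question is no, and this is a genuine gap, not a verification detail. Theorem~\ref{thm: main formal} does \emph{not} apply to $A' = RAR$ with $R = \bar D_\tau^{1/2} D_\tau^{-1/2}$. The hypothesis of Theorem~\ref{thm: main formal} is not ``any symmetric matrix with rows of $\ell_2$ norm $O(\sqrt d)$''; it requires that $A'$ be obtained from $A$ by reweighting only edges incident to a set $I$ of at most $10n/d$ vertices, and then $d'$ is defined as a consequence. That restriction is used essentially in the proof: the set $\EE$ of modified entries must decompose as $(I\times[n]) \cup (I^c\times I)$ so that $\|(A-\E A)_{\NN\cap\EE}\|$ and $\|\E A_\EE\|$ can be controlled via the few rows/columns in $I$. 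Your $R$ has $r_i\ne 1$ for essentially every $i$ (typically $|r_i-1|\sim 1/\sqrt d$), so your reweighting touches every edge; $\EE = [n]\times[n]$ and the argument behind Theorem~\ref{thm: main formal} gives nothing. Bounded row norms alone cannot save you --- the original $A$ itself has row $\ell_2$ norms $O(\sqrt{d_{\max}})$ yet fails to concentrate, which is the whole point of the paper.

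The paper's proof of Theorem~\ref{thm: Laplacian formal} sidesteps exactly this obstruction by replacing your $D_\tau$-based reweighting with a diagonal matrix $\Delta$ that equals the identity on every vertex with $d_i \le 8rd$ and only differs on the $\le n/d$ high-degree vertices (Lemma~\ref{lem: degrees of subgraphs}). The resulting $\Delta^{-1/2}A\Delta^{-1/2}$ is then a weight \emph{reduction} supported on a small vertex set, squarely inside the hypothesis of Theorem~\ref{thm: main formal}, while the remaining discrepancy between $\Delta$ and $D_\tau$ is absorbed into the $T$-term, which is bounded in Hilbert--Schmidt norm using the degree-deviation concentration $\sum_i(d_i-\bar d_i)^2 = O(r^2 nd)$ --- the same ingredient you correctly use for your rank-one correction. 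So your rank-one analysis and degree-concentration estimate are fine and genuinely parallel to the paper's Step~3; what is missing is a valid route for the main term, and that route requires a reweighting that is the identity off a small vertex set rather than the natural but global $\bar D_\tau^{1/2}D_\tau^{-1/2}$.
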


We will deduce this result from Theorem~\ref{thm: main informal} in Section~\ref{s: Laplacian proof}.
Theorem~\ref{thm: Laplacian informal} is an improvement upon a bound in \cite{Gao&Ma&Zhang&Zhou2015} that had an extra $\log d$ factor,
and it was conjectured there that the logarithmic factor is not needed.
Theorem~\ref{thm: Laplacian informal} confirms this conjecture.

\subsection{A numerical experiment}\label{sec: numerical}

To conclude our discussion of various ways to regularize sparse graphs,
let us illustrate the effect of regularization by a numerical experiment.
Consider an inhomogeneous Erd\"os-R\'enyi graph with $n=1000$ vertices,
$90\%$ of which have expected degrees $7$ and  $10\%$ percent have expected degrees $35$.
We then regularize the graph by reducing the weights of edges proportionally to the excess of degrees --
just as we described in Section~\ref{s: partial cases} item~\ref{item: excess},
except that we use the overall average degree (approximately $10$) instead of $d$ (which
results in a more severe weight reduction suitable for our illustration purpose).

Figure~\ref{fig: spectral shrinkage} shows the histogram of the spectrum of $A$ (left) and $A'$ (right). As we can see,
the high degree vertices lead to the long tails in the histogram of
the eigenvalues, and regularization shrinks these tails toward the
bulk.

\begin{figure}[htp]			
  \centering
  \begin{subfigure}{0.44\textwidth}
    \includegraphics[width=\textwidth]{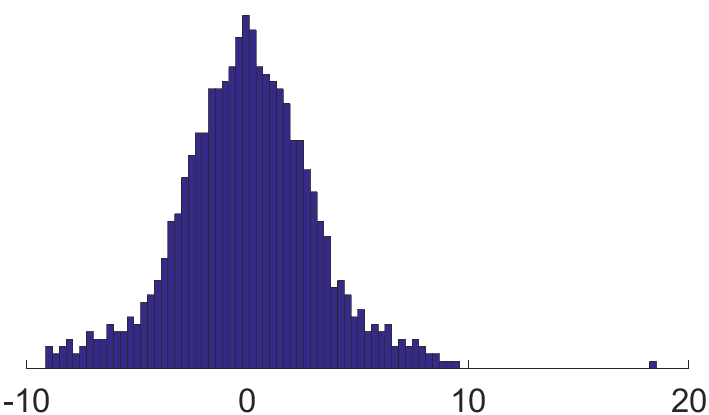}
  \end{subfigure}
  \qquad \qquad
  \begin{subfigure}{0.44\textwidth}
    \includegraphics[width=\textwidth]{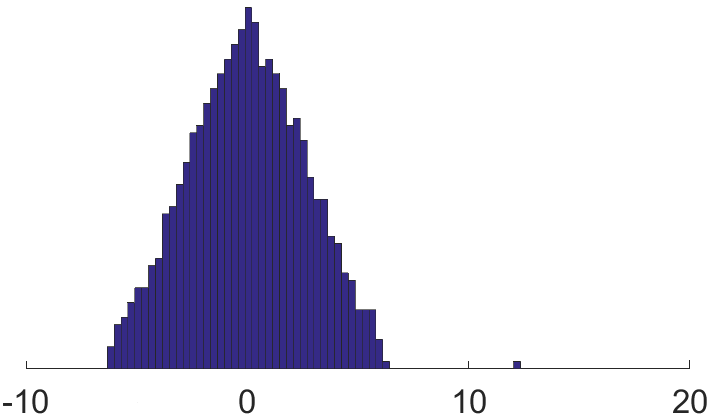}
  \end{subfigure}
  \caption{Histogram of the spectrum of adjacency matrix $A$ (left) and regularized adjacency matrix $A'$ (right)
  for a sparse random graph generated from the inhomogeneous Erd\"os-R\'enyi model with $n=1000$ vertices,
$90\%$ of which have expected degrees $7$ and  $10\%$ percent have expected degrees $35$. }
  \label{fig: spectral shrinkage}
\end{figure}

\subsection{Application: community detection in networks}				\label{s: networks}

Concentration of random graphs has an important application to
statistical analysis of networks, in particular to the problem of
community detection.    A common way of modeling communities in
networks is the {\em stochastic block model} \cite{Holland83},
which is a special case of the inhomogeneous Erd\"os-R\'enyi model considered in this paper.
For the purpose of this example, we focus on the simplest version of the
stochastic block model $G(n,\frac{a}{n},\frac{b}{n})$,  also known as
the balanced planted partition model,  defined as follows.
The set of vertices is divided into two subsets (communities) of size $n/2$ each.
Edges between vertices are drawn independently with probability $a/n$ if they are in the same
community and with probability $b/n$ otherwise.

The community detection problem is to recover the community labels of vertices from a single realization of the random graph model.
A large literature exists on both the recovery algorithms and the theory establishing when a recovery is possible
\cite{Decelle.et.al.2011,Mossel&Neeman&Sly2014, Mossel&Neeman&Sly2014a,
  Mossel&Neeman&SlyOnConsistencyThresholds2014,
  Abbe&Bandeira&Hall2014, Massoulie:2014,Bordenave.et.al2015non-backtracking}.
There are methods that perform {\em better than a random guess} (i.e. the fraction of misclassified vertices is bounded away from $0.5$ as $n \to \infty$ with high probability) under the condition
$$
(a-b)^2 > 2 (a+b),
$$
and no method can perform better than a random guess if this condition is violated.

Moreover, {\em strong consistency}, or exact recovery (labeling {\em all} vertices correctly with high probability) is possible when the expected degree $(a+b)/2$ is of order $\log n$ or larger and $a$ and $b$ are sufficiently separated, see \cite{Mossel&Neeman&SlyOnConsistencyThresholds2014,McS01,Bickel&Chen2009,Hajek&Wu&Xu2014,Cai&Li2015}.
{\em Weak consistency} (the fraction of mislabeled vertices going to 0 with high probability) is achievable if and only if
$$
(a-b)^2 > C_n (a+b) \quad \text{with} \quad C_n \rightarrow \infty,
$$
see \cite{Mossel&Neeman&SlyOnConsistencyThresholds2014}. Many of these results hold in the non-asymptotic
regime, for graphs of fixed size $n$. Thus, for any $\e>0$ there exists $C_\e$ (which only depends on $\e$) such that
one can recover communities up to $\e n$ mislabeled vertices as long as
$$
(a-b)^2 > C_\e (a+b).
$$
In particular, recovery of communities is possible even for very sparse graphs -- those with bounded expected degrees.
Several types of algorithms are known to succeed in this regume, including non-backtracking walks
\cite{Mossel&Neeman&Sly2014,Massoulie:2014,Bordenave.et.al2015non-backtracking},
spectral methods \cite{Chin&Rao&Vu2015} and methods based on semidefinite programming 
\cite{Guedon&Vershynin2014,Montanari&Sen2015}.

As an application of the new concentration results, we show that the {\em regularized spectral clustering} \cite{amini2013pseudo,Joseph&Yu2013},
one of the simplest most popular algorithms for community detection, can
recover communities in the sparse regime.  In general, spectral
clustering works by computing the leading eigenvectors of either the
adjacency matrix or the Laplacian or their regularized versions, and
running the $k$-means clustering algorithm on these eigenvectors to
recover the node labels.    In the simple case of the model
$G(n,\frac{a}{n},\frac{b}{n})$, one can simply assign nodes to
communities based on the sign (positive or negative) of the corresponding entries of the
eigenvector $v_2(\LL(A_\tau))$ corresponding to the second smallest
eigenvalue of regularized Laplacian matrix $\LL(A_\tau)$ (or the
regularized adjacency matrix $A'$).

Let us briefly explain how our concentration results validate regularized spectral clustering.
If the concentration of random graphs holds and $\LL(A_\tau)$ is close to $\LL(\E A_\tau)$,
then the standard perturbation theory (Davis-Kahan theorem below) shows that $v_2(\LL(A_\tau))$ is close to
$v_2(\LL(\E A_\tau))$, and in particular, the signs of these two eigenvectors must agree on most vertices.
An easy calculation shows that the signs of $v_2(\LL(\E A_\tau))$ recover the communities exactly:
this vector is a positive constant on one community and a negative constant on the
other. Therefore, the signs of $v_2(\LL(A_\tau))$ must recover the communities up to a small fraction
of misclassified vertices.

Before stating our result, let us quote a simple version of the Davis-Kahan theorem perturbation
theorem (see e.g. \cite[Theorem VII.3.2]{Bhatia1996}).

\begin{theorem}[Davis-Kahan theorem]\label{thm: Davis-Kahan}
Let $X,Y$ be symmetric matrices such that the second smallest eigenvalues of $X$ and $Y$ have multiplicity one and they are of distance at least $\delta>0$ from the remaining eigenvalues of $X$ and $Y$.
Denote by $x$ and $y$ the eigenvectors of $X$ and $Y$ corresponding to the second largest eigenvalues of $X$ and $Y$, respectively. Then
$$
\min_{\beta=\pm 1}\|x+\beta y\| \le \frac{2\|X-Y\|}{\delta}.
$$
\end{theorem}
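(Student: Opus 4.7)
The plan is to exploit the spectral gap by expanding one eigenvector in the eigenbasis of the other matrix. Let $\{v_j\}$ be an orthonormal eigenbasis of $Y$ with eigenvalues $\{\nu_j\}$, chosen so that $y = v_{j_0}$ corresponds to $\nu_{j_0} = \lambda_Y$, where $\lambda_Y$ denotes the distinguished second smallest eigenvalue. Expand $x = \sum_j b_j v_j$ with $\sum_j b_j^2 = 1$ and $b_{j_0} = \langle x, y \rangle$. Because $\min_{\beta = \pm 1}\|x + \beta y\|^2 = 2 - 2|\langle x, y\rangle| = 2\bigl(1 - |b_{j_0}|\bigr)$, the whole task reduces to showing that $|b_{j_0}|$ is close to $1$, equivalently that the off-diagonal mass $\sum_{j \neq j_0} b_j^2$ is small compared to $\|X-Y\|^2/\delta^2$.

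First, apply Weyl's inequality to conclude $|\lambda_X - \lambda_Y| \leq \|X-Y\|$, where $\lambda_X$ is the eigenvalue of $X$ attached to $x$. Next, rewrite the eigenvalue equation $Xx = \lambda_X x$ as
\begin{equation*}
(Y - \lambda_X I) x = (Y - X) x,
\end{equation*}
expand both sides in the basis $\{v_j\}$, and apply Parseval:
\begin{equation*}
\sum_j b_j^2 (\nu_j - \lambda_X)^2 = \|(Y-X)x\|^2 \leq \|X - Y\|^2.
\end{equation*}
Now invoke the gap hypothesis on $Y$: for every $j \neq j_0$, $|\nu_j - \lambda_Y| \geq \delta$, and combined with the Weyl bound this yields $|\nu_j - \lambda_X| \geq \delta - \|X-Y\|$. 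Restricting the Parseval identity to $j \neq j_0$ then gives
\begin{equation*}
\sum_{j \neq j_0} b_j^2 \leq \frac{\|X-Y\|^2}{(\delta - \|X-Y\|)^2}.
\end{equation*}

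It is harmless to assume $\|X-Y\| \leq \delta/2$ at this point, since otherwise $2\|X-Y\|/\delta \geq 1$ and the statement is vacuous given the trivial bound $\|x + \beta y\| \leq 2$. In this regime $\sum_{j \neq j_0} b_j^2 \leq 4\|X-Y\|^2/\delta^2$, and using $1 - |b_{j_0}| \leq 1 - b_{j_0}^2 = \sum_{j \neq j_0} b_j^2$ produces the bound $\min_{\beta}\|x+\beta y\| \leq 2\sqrt{2}\,\|X-Y\|/\delta$. The main obstacle is not conceptual but cosmetic: tightening the constant from $2\sqrt{2}$ down to the stated $2$ requires a bit more care, either by going through the $\sin\theta$ formulation of Davis--Kahan (where $\sin\theta \leq \|X-Y\|/\delta$ and then $\min_\beta\|x+\beta y\| = 2\sin(\theta/2) \leq \sqrt{2}\sin\theta$), or via the Riesz-projector / contour-integral argument used in Bhatia's book.
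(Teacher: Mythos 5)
The paper does not prove this theorem at all: it is quoted with a pointer to Bhatia \cite[Theorem~VII.3.2]{Bhatia1996}, whose argument runs through operator $\sin\Theta$ machinery (Sylvester equations / contour integrals for spectral projectors). Your approach is the elementary one-vector version of that argument: expand $x$ in the eigenbasis of $Y$, use Weyl to place $\lambda_X$ near $\lambda_Y$, and feed the resolvent identity $(Y-\lambda_X I)x=(Y-X)x$ through Parseval to control the off-component mass. This is correct, cleaner for the rank-one situation at hand, and it even uses less than the stated hypotheses (you only need the gap condition on $Y$; the gap on $X$ plays no role in your estimate beyond ensuring $x$ is well defined). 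The one thing worth tightening in your write-up is the discussion of the constant. You land on $2\sqrt{2}$ and call the gap to the stated $2$ ``cosmetic,'' but your sketch of the fix is not quite right: the $\sin\Theta$ bound $\sin\theta\le \|X-Y\|/\delta$ with constant $1$ is proved under a \emph{one-sided} separation hypothesis (an interval containing $\sigma(X|_E)$ that is $\delta$-separated from $\sigma(Y|_{F^\perp})$), which is not literally what this statement assumes; starting from the internal gaps of $X$ and $Y$ and passing through Weyl reintroduces the same $\delta-\|X-Y\|$ loss you already encountered, so the route you propose still does not produce the bare constant $2$ without additional bookkeeping. This does not matter for the paper --- Corollary~\ref{cor: community detection} absorbs any absolute constant into $C_\e$ --- but you should either keep $2\sqrt{2}$ and say the constant is immaterial, or be more careful about which variant of Davis--Kahan you are invoking rather than asserting the fix is routine.
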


\begin{corollary}[Community detection in sparse graphs]\label{cor: community detection}
Let $\e>0$ and $r\ge 1$. Let $A$ be the adjacency matrix drawn from the stochastic block model $G(n,\frac{a}{n},\frac{b}{n})$. Assume that
\begin{equation}\label{eq: ab}
  (a-b)^2>C_\e(a+b)
\end{equation}
where $C_\e = Cr^4\e^{-2}$ and $C$ is an appropriately large absolute constant.
Choose $\tau$ to be the average degree of the graph, i.e. $\tau=(d_1+\cdots+d_n)/n$ where $d_i$ are vertex degrees.
Then with probability at least $1-e^{-r}$, we have
$$
\min_{\beta = \pm 1}\|v_2(\LL(A_\tau))+\beta v_2(\LL(\E A_\tau))\| \le \e.
$$
In particular, the signs of the entires of $v_2(\LL(A_\tau))$ correctly estimate the partition into the two communities, up to at most $\e n$ misclassified vertices.
\end{corollary}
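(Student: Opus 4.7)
The plan is to combine the formal version of Theorem~\ref{thm: Laplacian informal} with the Davis--Kahan theorem, applied to an explicit spectral decomposition of $\LL(\E A_\tau)$. First I would write down that decomposition. The matrix $\E A_\tau = \E A + (\tau/n)\one\one^\tran$ has rank two: the constant vector $\one$ is an eigenvector with eigenvalue $\bar d_\tau := (a+b)/2 + \tau$, and the signed community indicator $u \in \R^n$ (equal to $+n^{-1/2}$ on one community and $-n^{-1/2}$ on the other) is an eigenvector with eigenvalue $(a-b)/2$. Every vertex has the same expected degree $\bar d_\tau$, so $D(\E A_\tau) = \bar d_\tau I$ and hence $\LL(\E A_\tau) = I - \E A_\tau/\bar d_\tau$. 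Its spectrum therefore consists of $0$ (eigenvector $\one/\sqrt n$), $\lambda_2 := 1 - (a-b)/(2\bar d_\tau)$ (eigenvector $u$), and the value $1$ with multiplicity $n-2$. In particular $v_2(\LL(\E A_\tau)) = u$ already encodes the partition exactly, and $\lambda_2$ is simple and separated from the remaining spectrum by the gap $\delta := (a-b)/(2\bar d_\tau)$.

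Next I would apply the Laplacian concentration bound (Theorem~\ref{thm: Laplacian formal}), which on the event of probability at least $1-e^{-r}$ gives
$$
\|\LL(A_\tau) - \LL(\E A_\tau)\| \lesssim r^2/\sqrt{d}, \qquad d = \max(a,b).
$$
A small technicality is that in the corollary $\tau$ is the random average degree; but the average degree concentrates sharply around $(a+b)/2 \asymp d$, so one can restrict to the high-probability event on which $\tau \asymp d$ and invoke the theorem there. Since~\eqref{eq: ab} makes this perturbation much smaller than $\delta$, the second smallest eigenvalue of $\LL(A_\tau)$ is also simple and at distance at least $\delta/2$ from the rest of its spectrum, so Davis--Kahan (Theorem~\ref{thm: Davis-Kahan}) applies and yields
$$
\min_{\beta=\pm 1}\|v_2(\LL(A_\tau)) + \beta v_2(\LL(\E A_\tau))\|
\le \frac{2\|\LL(A_\tau) - \LL(\E A_\tau)\|}{\delta}
\lesssim \frac{r^2 \bar d_\tau}{(a-b)\sqrt{d}}
\lesssim \frac{r^2\sqrt{a+b}}{a-b},
$$
using $\bar d_\tau \lesssim a+b$ and $d \le a+b$. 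The hypothesis $(a-b)^2 > Cr^4\e^{-2}(a+b)$ then makes the right-hand side at most $\e$, proving the displayed inequality.

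For the misclassification claim, I would use that the entries of $v_2(\LL(\E A_\tau)) = u$ are exactly $\pm n^{-1/2}$. Fix the sign $\beta$ that achieves the minimum above. Any vertex $i$ at which $\mathrm{sign}(v_2(\LL(A_\tau))_i)$ disagrees with $\mathrm{sign}(-\beta u_i)$ contributes at least $(n^{-1/2})^2 = n^{-1}$ to $\|v_2(\LL(A_\tau)) + \beta u\|^2$, so the number $M$ of misclassified vertices satisfies $M/n \le \e^2 \le \e$ (assuming $\e \le 1$, otherwise the claim is trivial). The main obstacle I anticipate is the bookkeeping in the second step: checking that Theorem~\ref{thm: Laplacian formal}, stated for a deterministic $\tau$ comparable to $d$, applies to the random $\tau$ used here, and tracking how the deviation parameter $r$ enters the probability and the norm bound --- this is exactly where the factor $r^4$ in $C_\e$ (rather than $r^2$) arises, through the squaring in Davis--Kahan.
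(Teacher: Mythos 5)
Your proof follows the same route as the paper's: compute the explicit eigenstructure of $\LL(\E A_\tau)$ (giving the spectral gap $\delta \asymp (a-b)/(a+b)$), invoke Theorem~\ref{thm: Laplacian formal} for the Laplacian concentration bound $O(r^2/\sqrt{d})$, and feed both into Davis--Kahan (Theorem~\ref{thm: Davis-Kahan}); the paper's own proof is a four-line sketch of exactly these steps. You go a bit further than the paper in two useful ways: you verify the eigenvalue/eigenvector computation for $\LL(\E A_\tau)$ in detail and you correctly flag the technicality that $\tau$ is random (the average degree) while Theorem~\ref{thm: Laplacian formal} is stated for a deterministic $\tau$ --- a point the paper silently glosses over --- noting that one can restrict to the high-probability event $\tau \asymp (a+b)/2$; filling that gap rigorously (e.g.\ via a union bound over a discretization of $\tau$, or a Lipschitz estimate for $\tau \mapsto \LL(A_\tau)$) would be worth carrying out, but the idea is sound and your accounting of how $r^4$ enters $C_\e$ is correct.
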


\begin{proof}
We apply Theorem~\ref{thm: Davis-Kahan} with $X=\LL(A_\tau)$ and $Y=\LL(\E A_\tau)$.
A simple calculation shows that the spectral gap $\delta$ defined in
Theorem~\ref{thm: Davis-Kahan} is of the order $(a-b)/(a+b)$.
The claim of Corollary~\ref{cor: community detection} then follows from Davis-Kahan Theorem~\ref{thm: Davis-Kahan},
Concentration Theorem~\ref{thm: Laplacian formal} (which is a formal version of Theorem~\ref{thm: Laplacian informal})
and condition \eqref{eq: ab}.
\end{proof}

\subsection{Organization of the paper}

In Section~\ref{s: decomposition}, we state a formal version of Theorem~\ref{thm: main informal}.
We show there how to deduce this result from a new decomposition of random graphs,
which we state as Theorem~\ref{thm: decomposition}.
We prove this decomposition theorem in Section~\ref{s: decomposition proof}.
In Section~\ref{s: Laplacian proof}, we state and prove a formal version of Theorem~\ref{thm: Laplacian informal}
about the concentration of the Laplacian. We conclude the paper with Section~\ref{s: questions}
where we propose some questions for further investigation.

\subsection*{Acknowledgement}

The authors are grateful to Ramon van Handel for several insightful comments on the
preliminary version of this paper.

\section{Full version of Theorem~\ref{thm: main informal}, and reduction to a graph decomposition} \label{s: decomposition}

In this section we state a more general and quantitative version of Theorem~\ref{thm: main informal},
and we reduce it to a new form of graph decomposition, which can be of interest on its own.

\begin{theorem}[Concentration of regularized adjacency matrices]  \label{thm: main formal}
  Consider a random graph from the inhomogeneous Erd\"os-R\'enyi model $G(n,(p_{ij}))$,
  and let $d = \max_{ij} n p_{ij}$.
  For any $r \ge 1$, the following holds with probability at least $1-n^{-r}$.
  Consider any subset consisting of at most $10n/d$ vertices, and
  reduce the weights of the edges incident to those vertices in an arbitrary way.
  Let $d'$ be the maximal degree of the resulting graph.
  Then the adjacency matrix $A'$ of the new (weighted) graph satisfies
  $$
  \|A' - \E A\| \le C r^{3/2} \big( \sqrt{d} + \sqrt{d'} \big).
  $$
  Moreover, the same bound holds for $d'$ being the maximal $\ell_2$ norm of the
  rows of $A'$.
\end{theorem}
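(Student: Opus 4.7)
The plan is to deduce Theorem~\ref{thm: main formal} from the graph decomposition result (Theorem~\ref{thm: decomposition}). I expect that decomposition, obtained via Grothendieck-Pietsch factorization applied to the random operator $A-\E A$, to produce with probability at least $1-n^{-r}$ a splitting $A-\E A = \Delta + R$ in which $\|\Delta\|\le Cr^{3/2}\sqrt d$ and $R$ is supported on rows and columns indexed by some set $N_0\subset[n]$ of size at most $cn/d$. Crucially, $N_0$ should be an intrinsic ``irregular'' set of the random graph, determined by the law of $A$ and not depending on any outside regularization scheme.

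Given such a decomposition, I would argue as follows. Let $S$ denote the (arbitrary) set of at most $10n/d$ vertices whose incident edges have been reweighted; then $A'-A$ is supported on rows and columns in $S$, so setting $M := N_0\cup S$ yields $|M|\le Cn/d$, and
\[
A' - \E A \;=\; \Delta \;+\; B, \qquad B := R + (A' - A),
\]
where $B$ is supported on rows and columns indexed by $M$. By the triangle inequality $\|A'-\E A\|\le\|\Delta\|+\|B\|$, and the first term is already $O(r^{3/2}\sqrt d)$ from the decomposition. To bound $\|B\|$, let $P_M$ be the coordinate projection onto $M$; since $B$ vanishes off $M\times[n]\cup[n]\times M$, one has the clean splitting $B = P_M B + P_{M^c}BP_M$, and each summand must be estimated using the regularization hypothesis that every row of $A'$ has $\ell_2$ norm at most $\sqrt{d'}$, the elementary bound that every row of $\E A$ has $\ell_2$ norm at most $d/\sqrt n$, together with whatever row/column operator bound Theorem~\ref{thm: decomposition} supplies for $R$.

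The main obstacle is sharpness. A naive Frobenius bound on $P_MB$ yields only $\|P_MB\|\le\sqrt{|M|}\sqrt{d'}\sim\sqrt{nd'/d}$, which is far larger than the target $\sqrt d+\sqrt{d'}$ unless the graph is dense. Resolving this gap is the crux of the argument and must exploit structure beyond the mere support of $R$: I expect Theorem~\ref{thm: decomposition} to provide a genuine spectral-norm control of $R$ of order $\sqrt d$ (inherited from the Grothendieck-Pietsch factorization), so that on the block $M$ the matrix $B$ inherits row-$\ell_2$ control of order $\sqrt{d'}$ from $A'$ and order $\sqrt d$ from $R$ and $\E A$, and the resulting row/cross blocks can be bounded by $Cr^{3/2}(\sqrt d+\sqrt{d'})$ rather than by the crude Frobenius estimate. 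Combining with the bound on $\Delta$ then yields the theorem.
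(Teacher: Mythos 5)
Your overall framework is aligned with the paper's: deduce the theorem from the graph decomposition, observe that the support of the bad part together with the regularization set $S$ fills a small union of row/column strips, and bound the two parts separately by the triangle inequality. That much is right, and the observation that the decomposition must be intrinsic to the random graph (not dependent on the regularization) is exactly the point.

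The gap is the one you flagged yourself, and the mechanism you ``expect'' to close it does not exist. You write that you expect Theorem~\ref{thm: decomposition} to ``provide a genuine spectral-norm control of $R$ of order $\sqrt d$.'' It does not, and cannot: the residual part $R$ (restricted to $\RR\cup\CC$) contains the rows of high-degree vertices, and its spectral norm can be as large as $\sqrt{d(A)}\gg\sqrt d$ in the sparse regime --- that is the whole reason $A$ itself fails to concentrate. What the decomposition actually supplies on $\RR$ and $\CC$ is a \emph{combinatorial} sparsity property: each row of $A_\RR$ and each column of $A_\CC$ has at most $O(r)$ nonzero entries. This is the ingredient your argument is missing. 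The spectral bound on the bad part is then not inherited from the decomposition but \emph{created} by the interaction of this row-sparsity with the regularization: since every column of $A'$ has $\ell_1$ (or $\ell_2$) norm at most $d'$ after regularization, and every row of $A'_\RR$ has $\ell_1$ norm $O(r)$, a Schur-test / Riesz--Thorin type inequality (Lemma~\ref{lem: norm bound with L1 condition} in the paper: $\|B\|\le\sqrt{ab}$ when all rows have $\ell_1$ norm $\le a$ and all columns have $\ell_1$ norm $\le b$) yields $\|A'_\RR\|\le\sqrt{O(r)\,d'}$. The analogous mixed bound, not a row-$\ell_2$/Frobenius estimate and not a spectral estimate on $R$, is what beats the naive $\sqrt{nd'/d}$ you correctly identified as insufficient. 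Without naming and using this $\ell_1$-row/$\ell_1$-column interpolation argument, and without the ``each row of $A_\RR$ has $O(r)$ ones'' output of the decomposition theorem, your Step for bounding $B$ does not go through.

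One secondary point: you also need to bound the effect of regularization on the \emph{good} block, i.e.\ the difference between $(A-\E A)_\NN$ and $(A'-\E A)_\NN$, since edges inside $\NN$ can be reweighted. In the paper this is handled by restricting $\E A$ to the at most $10n/d$ reweighted rows/columns and again invoking the same $\ell_1$-interpolation lemma; your proposal silently folds this into $B$, which is algebraically fine but should be noted explicitly since it contributes to the required support and row bounds.
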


In this result and in the rest of the paper, $C, C_1, C_2, \ldots$ denote absolute constants
whose values may be different from line to line.

\begin{remark}[Theorem~\ref{thm: main formal} implies Theorem~\ref{thm: main informal}]
    The subset of $10n/d$ vertices in Theorem~\ref{thm: main formal} can be completely arbitrary.
    So let us choose the high-degree vertices, say those with degrees larger than $2d$.
    There are at most $10n/d$ such vertices with high probability; this follows by
    an easy calculation, and also from Lemma~\ref{lem: degrees of subgraphs}.
    Thus we immediately deduce Theorem~\ref{thm: main informal}.
\end{remark}

\begin{remark}[Tight upper bound]
If we do not reduce weights of any edges and $d$ is bounded, then the
upper bound in Theorem~\ref{thm: main formal} is tight (up to a
constant depending on $r$). This is because of \eqref{eq: no-concentration
  sparse ER}, which states that the adjacency matrix does
not concentrate in the sparse regime without regularization.
\end{remark}

\begin{remark}[Method to prove Theorem~\ref{thm: main formal}]
One may wonder if Theorem~\ref{thm: main formal} can be proved by developing
an $\epsilon$-net argument similar to the method of J.~Kahn and E.~Szemeredi \cite{Friedman&Kahn&Szemeredi1989} and its versions \cite{Alon&Kahale1997, FeiOfe05, Lei&Rinaldo2013, Chin&Rao&Vu2015}.
Although we can not rule out such possibility, we do not see how this method could handle
a general regularization. The reader familiar with the method can easily
notice an obstacle. The contribution of the so-called light couples becomes hard to
control when one changes, and even reduces, the individual entries of $A$ (the weights of edges).

We will develop an alternative and somewhat simpler approach, which will be able to handle a general
regularization of random graphs.
It sheds light on the specific structure of graphs that enables concentration. We are going to identify this structure through a
{\em graph decomposition} in the next section. But let us pause briefly to mention the following
useful reduction.
\end{remark}

\begin{remark}[Reduction to directed graphs]
  Our arguments will be more convenient to carry out if the adjacency matrix $A$ has all independent entries.
  To be able to make this assumption, we can decompose $A$ into the upper-triangular and a lower-triangular parts,
  both of which have independent entries. If we can show that each of these parts concentrate about its expectation,
  it would follow that $A$ concentrate about $\E A$ by triangle inequality.

  In other words, we may prove Theorem~\ref{thm: main formal} for {\em directed} inhomogeneous
  Erd\"os-R\'enyi graphs, where edges between any vertices and in any direction appear
  indepednently with probabilities $p_{ij}$. In the rest of the argument, we will only work with such random
  directed graphs.
\end{remark}

\subsection{Graph decomposition}

In this section, we reduce Theorem~\ref{thm: main formal} to the following decomposition
of inhomogeneous Erd\"os-R\'enyi directed random graphs.
This decomposition may have an independent interest. Throughout the paper, we
denote by $B_\NN$ the matrix which coincides with a matrix $B$ on a subset of edges $\NN \subset [n] \times [n]$
and has zero entries elsewhere.

\begin{theorem}[Graph decomposition]				\label{thm: decomposition}
  Consider a random directed graph from the inhomogeneous Erd\"os-R\'enyi model,
  and let $d$ be as in \eqref{eq: d}.
  For any $r \ge 1$, the following holds with probability at least $1-3n^{-r}$.
  One can decompose the set of edges $[n] \times [n]$ into three classes $\NN$, $\RR$ and $\CC$
  so that the following properties are satisfied for the adjacency matrix $A$.
  \begin{itemize}
    \item The graph concentrates on $\NN$, namely
    $\|(A - \E A)_\NN\| \le C r^{3/2} \sqrt{d}$.
    \item Each row of $A_\RR$ and each column of $A_\CC$ has at most $32r$ ones.
  \end{itemize}
  Moreover, $\RR$ intersects at most $n/d$ columns,
  and $\CC$ intersects at most $n/d$ rows of $[n] \times [n]$.
\end{theorem}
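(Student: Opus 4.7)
My plan is to apply the Grothendieck--Pietsch factorization theorem to the centered matrix $B = A - \E A$, using it simultaneously to identify the bad rows and columns and to prove the spectral bound on the good part. The factorization output will naturally produce probability measures on rows and columns whose support structure reveals which vertices must be removed to restore concentration.

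First I would view $B$ as an operator between appropriate Banach spaces (e.g.\ $\ell_\infty^n \to \ell_2^n$) and apply a Pietsch-type domination inequality to obtain probability weights $\mu$ on columns and $\nu$ on rows such that a factorization bound of the form $|\langle Bx, y\rangle| \le K \|x\|_{L^2(\mu)} \|y\|_{L^2(\nu)}$ holds for all $x, y \in \R^n$. The crux of this step is showing that the factorization constant $K$ is of order $r^{3/2}\sqrt{d}$ with probability at least $1 - n^{-r}$; this is where the random graph structure and the sparsity parameter $d$ enter crucially, and it will rely on tail estimates for sums of independent Bernoullis combined with a bound on the 2-summing norm of a sparse random matrix.

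Next I would define the bad column set $J := \{j : \mu_j < c/n\}$ for a small absolute constant $c$, and dually the bad row set $I := \{i : \nu_i < c/n\}$. Since the total masses $\sum_j \mu_j$ and $\sum_i \nu_i$ each equal $1$, calibrating the thresholds against the parameter $d$ forces $|J|, |I| \le n/d$. I then set $\RR := [n] \times J$, $\CC := I \times [n]$, and $\NN := [n] \times [n] \setminus (\RR \cup \CC)$. On $\NN$, the factorization inequality directly yields $\|(A - \E A)_\NN\| \le Cr^{3/2}\sqrt{d}$, since restricting $\mu$ and $\nu$ to indices outside $J$ and $I$ leaves weighted norms comparable to the Euclidean norms. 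This is a strict improvement over off-the-shelf matrix concentration such as Bandeira--van Handel, which would leave a parasitic $\sqrt{\log n}$ factor that is inadmissible in the sparse regime.

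The remaining structural conditions follow from routine Chernoff arguments. For the row-count condition on $A_\RR$: since $|J| \le n/d$ and each entry probability satisfies $p_{ij} \le d/n$, the expected number of ones in any row restricted to $J$ is at most $1$, so a Chernoff bound gives at most $32r$ ones per row with probability $1 - n^{-5r}$, and a union bound over the $n$ rows (rather than all $\binom{n}{\le n/d}$ subsets) closes the argument. The column-count condition on $A_\CC$ is symmetric. The main obstacle is the Grothendieck--Pietsch step: showing that the factorization constant is tight at $O(r^{3/2}\sqrt{d})$ while the bad sets have the sharp cardinality $n/d$. Executing this requires a careful probabilistic analysis of the 2-summing norm of a sparse inhomogeneous Bernoulli matrix, likely via decoupling, symmetrization, and a chaining argument over a net of test vectors calibrated to the factorization measure itself.
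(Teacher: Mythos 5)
Your proposal captures the right functional-analytic tool (Grothendieck--Pietsch factorization) and the right broad strategy (use the factorization weights to identify bad rows and columns), and indeed the paper also proves the spectral bound by controlling an $\ell_\infty \to \ell_2$ norm probabilistically and then invoking Pietsch factorization. But there are two gaps that, as stated, break the argument, and they explain why the paper's proof is necessarily \emph{iterative} rather than single-shot.

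\textbf{Gap 1: the threshold tradeoff in Pietsch factorization.} You want the two things at once: a bad set of size $|J^c| \lesssim n/d$ \emph{and} a spectral bound of order $\sqrt d$ on the good part. Pietsch factorization cannot give both in one pass. The weights satisfy $\sum_j \mu_j = 1$, and keeping columns with $\mu_j \le t$ means (by Markov) the removed set has size $\lesssim 1/t$. If you set $t = c/n$ (your stated definition), you indeed get the spectral factor $\|x\|_{L^2(\mu)} \le \sqrt{c/n}\,\|x\|_2$ and hence a bound of order $\|B\|_{\infty\to 2}\cdot\sqrt{1/n} \sim \sqrt d$, but the removed set has size $\Theta(n)$, not $n/d$. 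If instead you set $t = cd/n$ to force $|J^c| \lesssim n/d$, then $\|x\|_{L^2(\mu)} \le \sqrt{cd/n}\,\|x\|_2$, and the spectral bound becomes $\|B\|_{\infty\to 2}\cdot\sqrt{d/n} \sim d$, which is off by $\sqrt d$. This is exactly the reason the paper does not try to reach $n/d$ bad columns with a single application: Lemma~\ref{lem: concentration in spectral norm} removes a \emph{constant fraction} of columns (giving the sharp $\sqrt d$ on the retained block), and the resulting exceptional $m/2\times m/2$ block is handled recursively. Inside the iteration, the parameter $\alpha$ decreases geometrically so the errors form a convergent series, and the $\RR$, $\CC$ classes accumulate only $O(n/d)$ rows/columns because those are identified through the degree-count Lemmas~\ref{lem: degrees of subgraphs}--\ref{lem: more degrees of subgraphs}, not through Pietsch weights.

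\textbf{Gap 2: the $\ell_\infty\to\ell_2$ norm of the unrestricted centered matrix is not under control.} You flag the 2-summing / $\ell_\infty\to\ell_2$ estimate as the crux but propose to attack it head-on (decoupling, symmetrization, chaining over a measure-calibrated net) for $B = A - \E A$. The trouble is that $\|B\|_{\infty\to 2}^2 = \max_{x\in\{\pm1\}^n}\sum_i Y_i^2$ with $Y_i = \sum_j B_{ij}x_j$, and for rows $i$ with abnormally large degree $d_i$ the variable $Y_i$ is only sub-exponential, so $Y_i^2$ has a $\psi_{1/2}$ (sub-Weibull) tail, not a sub-exponential one. The union bound over the $2^n$ sign vectors (or any entropy argument with comparable cost) then fails to produce an $O(dnr)$ bound at the level $1-n^{-r}$. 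The paper's Lemma~\ref{lem: concentration in cut norm} sidesteps this by inserting the truncation indicator $\xi_i = \one_{\{\sum_j A_{ij} \le \alpha d\}}$ \emph{before} estimating the $\ell_\infty\to\ell_2$ norm, which makes $X_i\xi_i$ bounded by $\alpha d$ and hence genuinely sub-Gaussian (inequality \eqref{subgaussian bound}); only then does the union bound close. The high-degree rows excluded by $\xi_i$ are precisely what feeds the $\RR$, $\CC$ classes and the exceptional block. Your one-shot scheme never removes them before applying Pietsch, so the factorization constant $K$ is not $O(r^{3/2}\sqrt d)$ as you assert. As a smaller point, your ``bad column set'' $J := \{j : \mu_j < c/n\}$ has the inequality reversed: columns with \emph{small} weight are the ones kept (Theorem~\ref{thm: GP} retains $j$ with $\mu_j\le 1/(\delta m)$); as written, $J$ would be almost all of $[n]$, and $\RR = [n]\times J$ would intersect nearly every column.
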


Figure~\ref{fig: decomposition} illustrates a possible decomposition Theorem~\ref{thm: decomposition} can provide.
The edges in $\NN$ form a big ``core'' where the graph concentrates well
even without regularization. The edges in $\RR$ and $\CC$ can be thought of (at least heuristically)
as those attached to high-degree vertices.

\begin{figure}[htp]			  	
  \centering
  \includegraphics[width=0.2\textwidth]{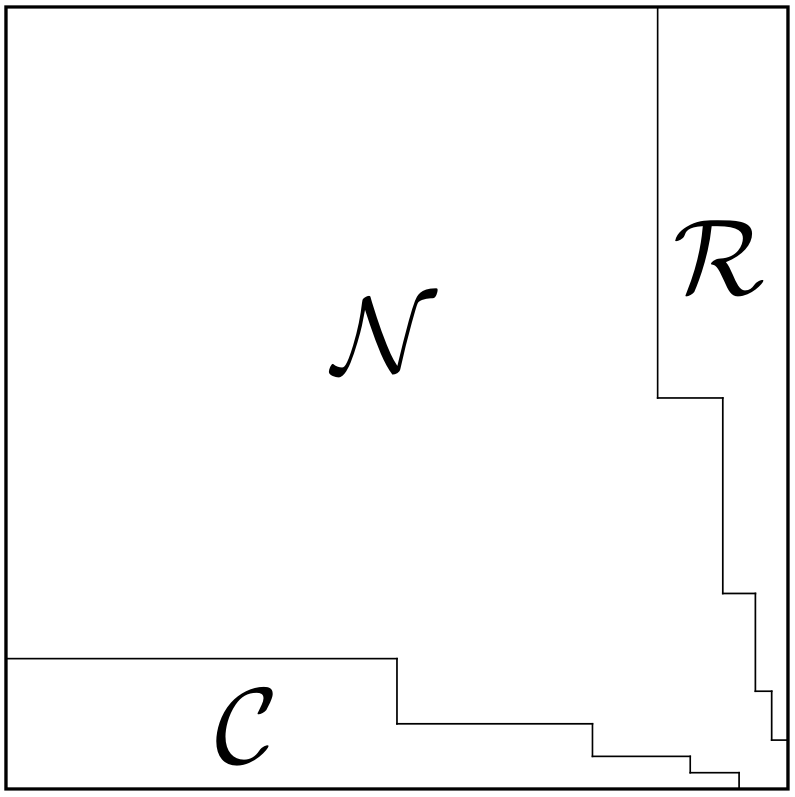}
  \caption{An example of graph decomposition in Theorem~\ref{thm: decomposition}.}
   \label{fig: decomposition}	
\end{figure}


We will prove Theorem~\ref{thm: decomposition} in Section~\ref{s: decomposition proof};
let us pause to deduce Theorem~\ref{thm: main formal} from it.

\subsection{Deduction of Theorem~\ref{thm: main formal}}

First, let us explain informally how the graph decomposition could lead to Theorem~\ref{thm: main formal}.
The regularization of the graph does not destroy the properties of $\NN$, $\RR$ and $\CC$
in Theorem~\ref{thm: decomposition}. Moreover,
regularization creates a new property for us, allowing for a good control of the {\em columns}
of $\RR$ and {\em rows} of $\CC$.
Let us focus on $A_{\RR}$ to be specific.
The $\ell_1$ norms of all columns of this matrix are at most $d'$, and the $\ell_1$
norms of all rows are $O(r)$ by Theorem~\ref{thm: decomposition}.
By a simple calculation which we will do in Lemma~\ref{lem: norm bound with L1 condition}, this implies
that $\|A_\RR\| = O(\sqrt{rd'})$.
A similar bound can be proved for $\CC$. Combining $\NN$, $\RR$ and $\CC$ together will lead to
the error bound $O(r^{3/2}(\sqrt{d} + \sqrt{d'}))$ in Theorem~\ref{thm: main formal}.

\medskip

To make this argument rigorous, let us start with the simple calculation we just mentioned.

\begin{lemma}				\label{lem: norm bound with L1 condition}
  Consider a matrix $B$ in which each row has $\ell_1$ norm at most $a$,
  and each column has $\ell_1$ norm at most $b$.
  Then $\|B\| \le \sqrt{ab}$.
\end{lemma}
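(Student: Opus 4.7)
The plan is to estimate $\|Bx\|^2$ directly for an arbitrary vector $x$, using Cauchy-Schwarz with a weighted inner product that is tailored to the row-$\ell_1$ structure of $B$. The trick is to split each $|B_{ij}|$ inside the sum $(\sum_j B_{ij} x_j)^2$ into the product $|B_{ij}|^{1/2} \cdot |B_{ij}|^{1/2}$, so that one copy controls the Cauchy-Schwarz weight and the other reassembles into the row-sum bound.

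Concretely, for any $x \in \R^n$ and any row $i$, Cauchy-Schwarz gives
$$
\Big(\sum_j B_{ij} x_j\Big)^2 \le \Big(\sum_j |B_{ij}|\Big)\Big(\sum_j |B_{ij}|\, x_j^2\Big) \le a \sum_j |B_{ij}|\, x_j^2,
$$
using the row bound. Summing over $i$ and then swapping the order of summation,
$$
\|Bx\|^2 \le a \sum_j x_j^2 \sum_i |B_{ij}| \le a b \|x\|^2,
$$
where the second inequality uses the column bound. Taking supremum over unit $x$ yields $\|B\| \le \sqrt{ab}$.

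There is no real obstacle here: the only thing to be careful about is the direction in which the $\ell_1$ bounds are applied (rows first, columns second), and the fact that $|B_{ij}|$ is nonnegative so the Cauchy-Schwarz step with weights $|B_{ij}|$ is legitimate. The proof is short enough that I would present it in full rather than sketch further.
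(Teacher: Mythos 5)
Your proof is correct and is essentially identical to the paper's: both apply Cauchy--Schwarz row-by-row with the weight decomposition $|B_{ij}| = |B_{ij}|^{1/2}\cdot|B_{ij}|^{1/2}$, use the row-$\ell_1$ bound first, then swap the order of summation and apply the column bound.
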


\begin{proof}
Let $x$ be a vector with $\|x\|_2=1$.
Using Cauchy-Schwarz inequality and the assumptions, we have
\begin{eqnarray*}
 \| B x \|_2^2 &=& \sum_i \Big( \sum_j B_{ij} x_j \Big)^2
 \le \sum_i \Big( \sum_j |B_{ij}| \sum_j |B_{ij}| x_j^2 \Big)\\
 &\le& \sum_i \Big( a \sum_j |B_{ij}| x_j^2 \Big)
 = a \sum_j x_j^2 \sum_i |B_{ij}|
\le  a \sum_j x_j^2 b = ab.
\end{eqnarray*}
Since $x$ is arbitrary, this completes the proof.
\end{proof}
\begin{remark}[Riesz-Thorin interpolation theorem implies Lemma~\ref{lem: norm bound with L1 condition}]
Lemma~\ref{lem: norm bound with L1 condition} can also be deduced from Riesz-Thorin interpolation theorem (see e.g. \cite[Theorem~2.1]{Stein&Shakarchi2011}),
since the maximal $\ell_1$ norm of columns is the $\ell_1 \rightarrow \ell_1$ operator norm, and the maximal $\ell_1$ norm of rows is the $\ell_\infty \rightarrow \ell_\infty$ operator norm.
\end{remark}

We are ready to formally deduce the main part of Theorem~\ref{thm: main formal}
from Theorem~\ref{thm: decomposition}; we defer the ``moreover'' part to Section~\ref{s: moreover}.

\begin{proof}[Proof of Theorem~\ref{thm: main formal} (main part).]
Fix a realization of the random graph that satisfies the conclusion of Theorem~\ref{thm: decomposition},
and decompose the deviation $A' - \E A$ as follows:
\begin{equation}         \label{eq: decomposition A'-EA}
A' - \E A = (A' - \E A)_\NN + (A' - \E A)_\RR + (A' - \E A)_\CC.
\end{equation}
We will bound the spectral norm  of each of the three terms separately.

\medskip

{\bf Step 1. Deviation on $\NN$.} Let us further decompose
\begin{equation}         \label{eq: on CC decomposed}
(A' - \E A)_\NN = (A - \E A)_\NN - (A-A')_\NN.
\end{equation}
By Theorem~\ref{thm: decomposition}, $\|(A - \E A)_\NN\| \le C r^{3/2} \sqrt{d}$.
To control the second term in \eqref{eq: on CC decomposed},
denote by $\EE \subset [n] \times [n]$ the subset of edges that
are reweighed in the regularization process. 
Since $A$ and $A'$ are equal on $\EE^c$, we have
\begin{align}
\|(A-A')_\NN\|
  &= \|(A - A')_{\NN \cap \EE}\|
  \le \|A_{\NN \cap \EE}\| \qquad \text{(since $0 \le A-A' \le A$ entrywise)} \nonumber\\
  &\le \|(A-\E A)_{\NN \cap \EE}\| + \|\E A_{\NN \cap \EE}\|  \qquad \text{(by triangle inequality).} \label{eq: NN on EE}
\end{align}
Further, a simple restriction property implies that
\begin{equation}         \label{eq: NN cap EE}
\|(A-\E A)_{\NN \cap \EE}\| \le 2 \|(A-\E A)_\NN\|.
\end{equation}
Indeed, restricting a matrix onto a product subset of $[n] \times [n]$ can only reduce its norm.
Although the set of reweighted edges $\EE$ is not a product subset, it can be decomposed into two product subsets:
\begin{equation}         \label{eq: EE}
\EE = \big( I \times [n] \big) \cup \big( I^c \times I \big)
\end{equation}
where $I$ is the subset of vertices incident to the edges in $\EE$.
Then \eqref{eq: NN cap EE} holds;
right hand side of that inequality is bounded by $C r^{3/2} \sqrt{d}$
by Theorem~\ref{thm: decomposition}.
Thus we handled the first term in \eqref{eq: NN on EE}.

To bound the second term in \eqref{eq: NN on EE}, we can use another restriction property
that states that the norm of the matrix with non-negative entries can only reduce by
restricting onto any subset of $[n] \times [n]$ (whether a product subset or not). This yields
\begin{equation}         \label{eq: EA on NN EE}
\|\E A_{\NN \cap \EE}\| \le \|\E A_\EE\| \le \|\E A_{I \times [n]}\| + \|\E A_{I^c \times I}\|
\end{equation}
where the second inequality follows by \eqref{eq: EE}.
By assumption, the matrix $\E A_{I \times [n]}$ has $|I| \le 10n/d$ rows and each of its entries
is bounded by $d/n$. Hence the $\ell_1$ norm of all rows is bounded by $d$, and
the $\ell_1$ norm of all columns is bounded by $10$. Lemma~\ref{lem: norm bound with L1 condition}
implies that $\|\E A_{I \times [n]}\| \le \sqrt{10 d}$. A similar bound holds for
the second term of \eqref{eq: EA on NN EE}. This yields
$$
\|\E A_{\NN \cap \EE}\| \le 5 \sqrt{d},
$$
so we handled the second term in \eqref{eq: NN on EE}. Recalling that the first term there is
bounded by $C r^{3/2} \sqrt{d}$, we conclude that
$\|(A-A')_\NN\| \le 2C r^{3/2} \sqrt{d}$.

Returning to \eqref{eq: on CC decomposed}, we recall that the first term in the right hand
is bounded by $C r^{3/2} \sqrt{d}$, and we just bounded the second term by $2C r^{3/2} \sqrt{d}$.
Hence
$$
\|(A' - \E A)_\NN\| \le 4C r^{3/2} \sqrt{d}.
$$

\medskip

{\bf Step 2. Deviation on $\RR$ and $\CC$.}
By triangle inequality, we have
$$
\|(A' - \E A)_\RR\| \le \|A'_\RR\| + \|\E A_\RR\|.
$$
Recall that $0 \le A_\RR' \le A_\RR$ entrywise.
By Theorem~\ref{thm: decomposition}, each of the rows of $A_\RR$, and thus also of $A_\RR'$,
has $\ell_1$ norm at most $32 r$. Moreover, by definition of $d'$,
each of the columns of $A'$, and thus also of $A_\RR'$, has $\ell_1$ norm at most $d'$.
Lemma~\ref{lem: norm bound with L1 condition} implies that $\|A'_\RR\| \le \sqrt{32r d'}$.

The matrix $\E A_\RR$ can be handled similarly. By Theorem~\ref{thm: decomposition},
it has at most $n/d$ entries in each row, and all entries are bounded by $d/n$.
Thus each column of $\E A_\RR$ has $\ell_1$ norm at most $1$, and
and each row has $\ell_1$ norm at most $d$. Lemma~\ref{lem: norm bound with L1 condition}
implies that $\|\E A_\RR\| \le \sqrt{d}$.

We showed that
$$
\|(A' - \E A)_\RR\| \le \sqrt{32r d'} + \sqrt{d}.
$$
A similar bound holds for $\|(A' - \E A)_\CC\|$. Combining the bounds on the deviation of
$A'-\E A$ on $\NN$, $\RR$ and $\CC$ and putting them into \eqref{eq: decomposition A'-EA},
we conclude that
$$
\|A' - \E A\| \le 4C r^{3/2} \sqrt{d} + 2 \big( \sqrt{32r d'} + \sqrt{d} \big).
$$
Simplifying this inequality, we complete the proof of the main part of Theorem~\ref{thm: main formal}.
\end{proof}

\section{Proof of Decomposition Theorem~\ref{thm: decomposition}}			\label{s: decomposition proof}

\subsection{Outline of the argument}				\label{s: outline}

We will construct the decomposition in Theorem~\ref{thm: decomposition} by an iterative procedure.
The first and crucial step is to find a big block\footnote{In this paper, by block we mean a product set $I \times J$
  with arbitrary index subsets $I, J \subset [n]$. These subsets are not required to be intervals of successive integers.}
$\NN' \subset [n] \times [n]$ of size at least $(n-n/d) \times n/2$
on which $A$ concentrates, i.e.
$$
\|(A-\E A)_{\NN'}\| = O(\sqrt{d}).
$$
To find such block, we first establish concentration
in $\ell_\infty \to \ell_2$ norm; this
can be done by standard probabilistic techniques.
Next, we can automatically upgrade this to concentration in the spectral norm ($\ell_2 \to \ell_2$)
once we pass to an appropriate block $\NN'$. This can be done using a general result from functional
analysis, which we call Grothendieck-Pietsch factorization.

Repeating this argument for the transpose, we obtain another block $\NN''$ of size at least
$n/2 \times (n-n/d)$ where the graph concentrates as well. So the graph concentrates on
$\NN_0 := \NN' \cup \NN''$.
The ``core'' $\NN_0$ will form the first part of the class $\NN$ we are constructing.

It remains to control the graph on the complement of $\NN_0$. That set of edges is quite small; it can be described as
a union of a block $\CC_0$ with $n/d$ rows, a block $\RR_0$ with $n/d$ columns and an exceptional $n/2 \times n/2$ block; see Figure~\ref{fig: first decomposition} for illustration.
We may consider $\CC_0$ and $\RR_0$ as the first parts of the future classes $\CC$ and $\RR$ we are constructing.

Indeed, since $\CC_0$ has so few rows, the expected number of ones in each column of $\CC_0$ is bounded by $1$.
For simplicity, let us think that all columns of $\CC_0$ have $O(1)$ ones as desired.
(In the formal argument, we will add the bad columns to the exceptional block.)
Of course, the block $\RR_0$ can be handled similarly.

At this point, we decomposed $[n] \times [n]$ into $\NN_0$, $\RR_0$, $\CC_0$ and
an exceptional $n/2 \times n/2$ block. Now we repeat the process for the exceptional block,
constructing $\NN_1$, $\RR_1$, and $\CC_1$ there, and so on. Figure~\ref{fig: decomposition-iteration}	
illustrates this process.
At the end, we choose $\NN$, $\RR$ and $\CC$ to be the unions of the blocks $\NN_i$, $\RR_i$ and $\CC_i$
respectively.

\begin{figure}[htp]			
  \centering
  \begin{subfigure}[b]{0.3\textwidth}
    \raisebox{10pt}{\includegraphics[width=0.85\textwidth]{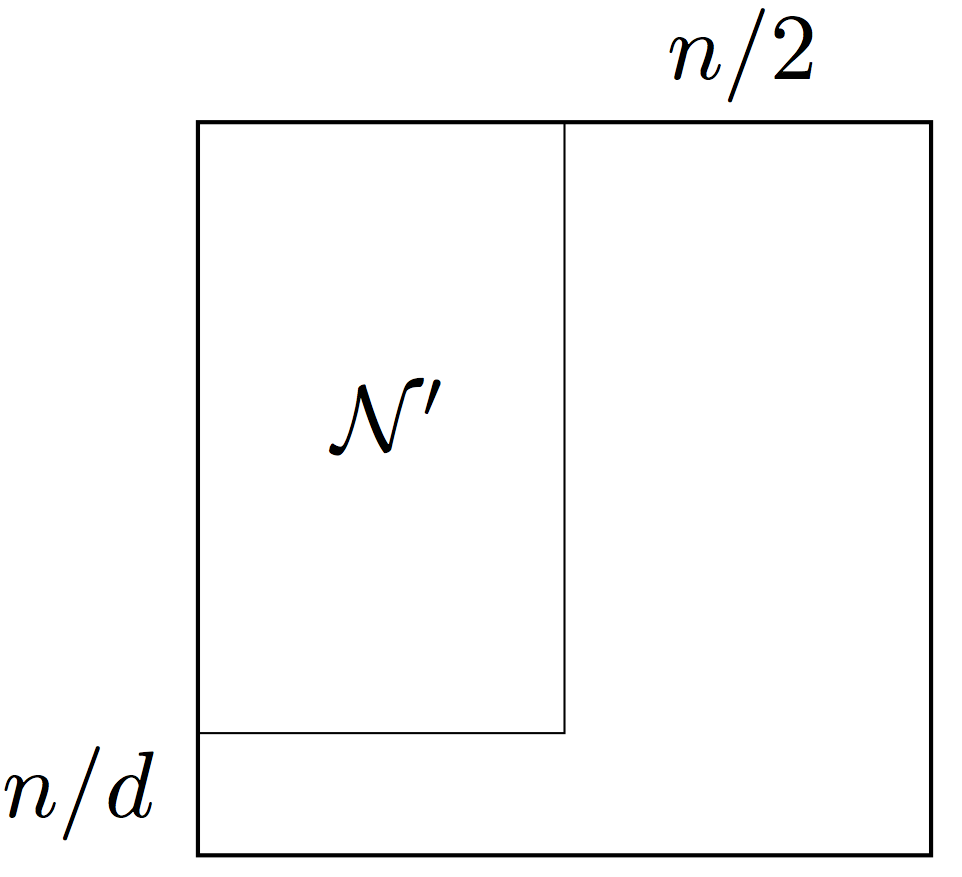}}
    \caption{The core.}
    \label{fig: first core block}
  \end{subfigure}
  \quad
  \begin{subfigure}[b]{0.3\textwidth}
    \includegraphics[width=\textwidth]{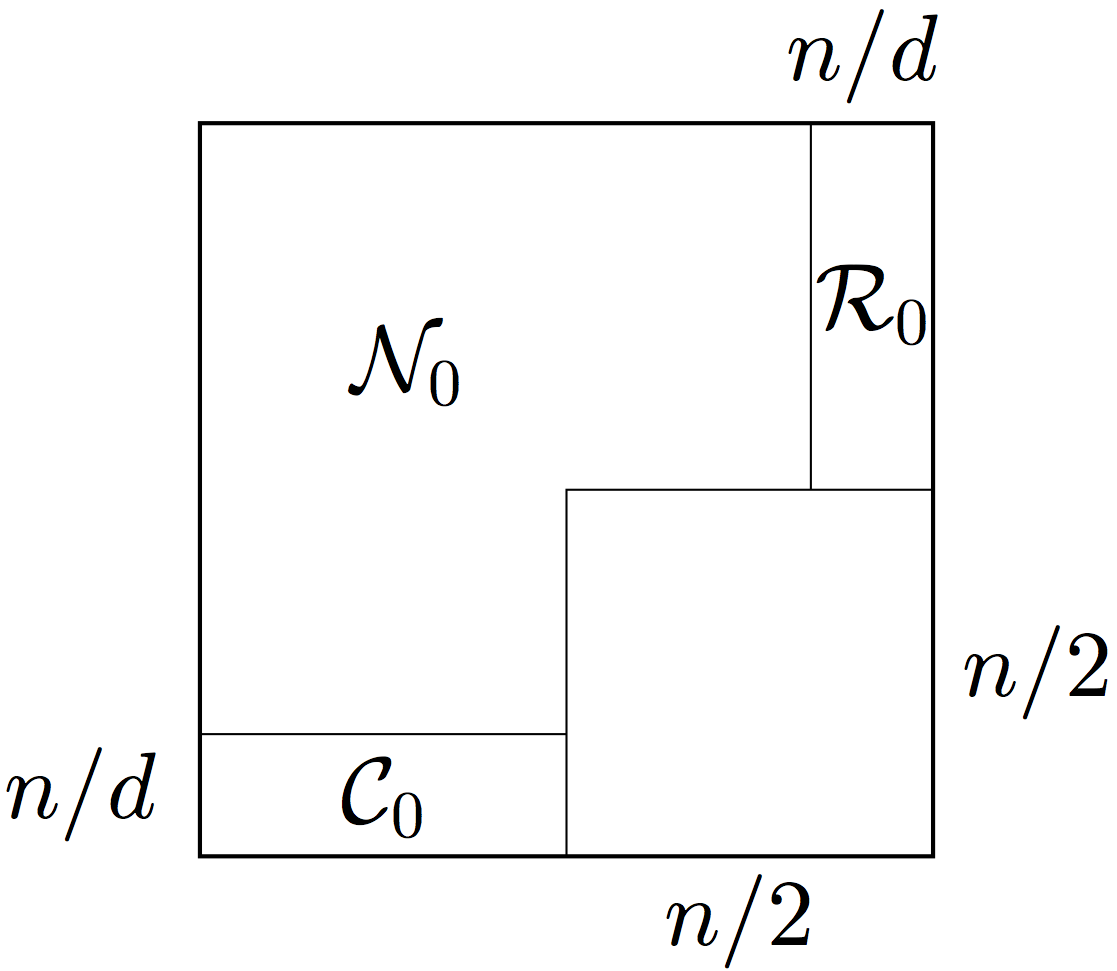}
    \caption{After the first step.}
    \label{fig: first decomposition}
  \end{subfigure}
  \quad
  \begin{subfigure}[b]{0.3\textwidth} \qquad
    \raisebox{10pt}{\includegraphics[width=0.7\textwidth]{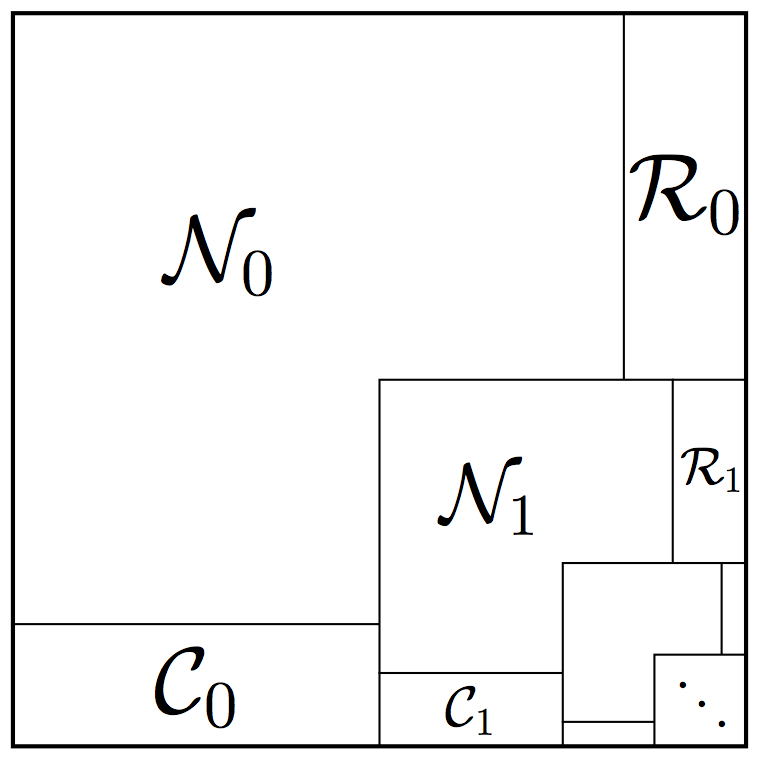}}
    \caption{Final decomposition.}
    \label{fig: decomposition-iteration}
  \end{subfigure}
  \caption{Constructing decomposition iteratively in the proof of Theorem~\ref{thm: decomposition}.}
\end{figure}

Two precautions have to be taken in this argument. First, we need to make concentration on the core blocks $\NN_i$
{\em better at each step}, so that the sum of those error bounds would not depend
of the total number of steps. This can be done with little effort, with the help of the exponential decrease of
the size of the blocks $\NN_i$.
Second, we have a control of the sizes but not locations of the exceptional blocks.
Thus to be able to carry out the decomposition argument inside
an exceptional block, we need to make the argument valid {\em uniformly} over all blocks of that size.
This will require us to be delicate with probabilistic arguments, so we can take a union
bound over such blocks.

\subsection{Grothendieck-Pietsch factorization}				\label{s: GP}

As we mentioned in the previous section, our proof of Theorem~\ref{thm: decomposition}
is based on Grothendieck-Pietsch factorization.
This general and well known result in functional analysis \cite{Pietsch1978, Pisier1986}
has already been used in a similar probabilistic context, see \cite[Proposition 15.11]{Ledoux&Talagrand1991}.

Grothendieck-Pietsch factorization compares two matrix norms,
the $\ell_2 \to \ell_2$ norm (which we call the spectral norm )
and the $\ell_\infty \to \ell_2$ norm.
For a $k \times m$ matrix $B$, these norms are defined as
$$
\|B\| = \max_{\|x\|_2 = 1} \|Bx\|_2,
\qquad
\|B\|_{\infty \to 2} = \max_{\|x\|_\infty = 1} \|Bx\|_2 = \max_{x \in \{-1,1\}^m} \|Bx\|_2.
$$
The $\ell_\infty \to \ell_2$ norm is usually easier to control, since the supremum is
taken with respect to the discrete set $\{-1,1\}^m$, and any vector there has all
coordinates of the same magnitude.

To compare the two norms, one can start with the obvious inequality
$$
\frac{\|B\|_{\infty \to 2}}{\sqrt{m}} \le \|B\| \le \|B\|_{\infty \to 2}.
$$
Both parts of this inequality are optimal, so there is an unavoidable slack between the upper
and lower bounds.
However, Grothendieck-Pietsch factorization allows us to tighten the inequality
by changing $B$ sightly. The next two results offer two ways to change $B$ --
introduce weights and pass to a sub-matrix.

\begin{theorem}[Grothendieck-Pietsch's factorization, weighted version]		\label{thm: GP factorization}
  Let $B$ be a $k \times m$ real matrix.
  Then there exist positive weights $\mu_j$ with $\sum_{j = 1}^m \mu_j =1$ such that
  \begin{equation}         \label{eq: GP factorization}
  \| B \|_{\infty\rightarrow 2} \le \| B D_\mu^{-1/2}\| \le \sqrt{ \pi/2 } \| B \|_{\infty\rightarrow 2},
  \end{equation}
  where $D_\mu = \diag(\mu_j)$ denotes the $m\times m$ diagonal matrix
  with weights $\mu_j$ on the   diagonal.
\end{theorem}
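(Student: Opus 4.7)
The lower bound is immediate: for any $\epsilon \in \{-1,1\}^m$ the vector $z := D_\mu^{1/2}\epsilon$ has $\|z\|_2^2 = \sum_j \mu_j = 1$, so
\[
  \|B\epsilon\|_2 = \|B D_\mu^{-1/2} z\|_2 \le \|B D_\mu^{-1/2}\|,
\]
and taking the supremum over $\epsilon$ gives $\|B\|_{\infty\to 2} \le \|B D_\mu^{-1/2}\|$.

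The upper bound is the real content. Set $M := \|B\|_{\infty\to 2}$. My plan is to phrase the search for $\mu$ as a semidefinite program and apply duality. After the substitution $z = D_\mu^{1/2} y$, one has $\|B D_\mu^{-1/2}\|^2 = \sup_y \|By\|_2^2 / \sum_j \mu_j y_j^2$, so (with $t_j = \lambda \mu_j$) finding $\mu \in \Delta_m$ with $\|B D_\mu^{-1/2}\|^2 \le (\pi/2) M^2$ is equivalent to showing
\[
  \min\Bigl\{ \textstyle\sum_j t_j \ :\ t_j \ge 0,\ \diag(t) \succeq B^\tran B \Bigr\} \le (\pi/2) M^2.
\]
Standard SDP/Lagrangian duality identifies this minimum with
\[
  \max\bigl\{ \operatorname{tr}(\Sigma B^\tran B) \ :\ \Sigma \succeq 0,\ \Sigma_{jj} \le 1\ \forall j \bigr\},
\]
reducing everything to bounding this dual value by $(\pi/2) M^2$.

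The core of the argument is then a Gaussian/Schur-product computation. Given any feasible $\Sigma$, I would first normalize to $\Sigma_{jj} = 1$ by replacing $\Sigma$ with $\Sigma + (I - \diag\Sigma)$, a PSD perturbation that only increases $\operatorname{tr}(\Sigma B^\tran B)$. Take $g \sim N(0, \Sigma)$ and $\epsilon := \operatorname{sign}(g) \in \{-1,1\}^m$. Then $\operatorname{tr}(\Sigma B^\tran B) = \E\|Bg\|_2^2$, while $\E\|B\epsilon\|_2^2 \le M^2$ by definition of $M$. Sheppard's formula gives $\E[\epsilon_j\epsilon_k] = (2/\pi)\arcsin(\Sigma_{jk})$, so $\E\|B\epsilon\|_2^2 = (2/\pi)\operatorname{tr}(\arcsin[\Sigma]\cdot B^\tran B)$, where $\arcsin[\Sigma]$ denotes entry-wise application. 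The remaining ingredient is the PSD inequality $\arcsin[\Sigma] \succeq \Sigma$: using the expansion $\arcsin(x) - x = \sum_{k\ge 1} c_k x^{2k+1}$ with $c_k > 0$ together with the Schur product theorem (Hadamard powers of PSD matrices are PSD), one writes $\arcsin[\Sigma] - \Sigma = \sum_{k\ge 1} c_k \Sigma^{\odot(2k+1)} \succeq 0$. Taking the trace against the PSD matrix $B^\tran B$ then yields
\[
  \operatorname{tr}(\Sigma B^\tran B) \le \operatorname{tr}(\arcsin[\Sigma]\cdot B^\tran B) = (\pi/2)\,\E\|B\epsilon\|_2^2 \le (\pi/2) M^2,
\]
which is exactly what we need.

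The main obstacle I foresee is the arcsin PSD inequality $\arcsin[\Sigma] \succeq \Sigma$, which is where the constant $\sqrt{\pi/2}$ ultimately originates; happily the Schur product theorem turns it into a direct consequence of a power series with nonnegative coefficients. Smaller technical points -- strong duality for the SDP and edge cases such as $\Sigma_{jj} = 0$ or $\mu_j = 0$ in the optimizer -- can be cleaned up by standard perturbation and limiting arguments, so $\mu$ can be taken strictly positive.
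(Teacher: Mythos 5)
Your argument is correct and complete; I verified each step. The lower bound by testing at $z = D_\mu^{1/2}\epsilon$ is immediate. For the upper bound, the reduction to the SDP $\min\{\sum_j t_j : \diag(t) \succeq B^\tran B\}$ is valid (note $\diag(t) \succeq B^\tran B \succeq 0$ already forces $t \ge 0$, so the nonnegativity constraint is redundant), Slater's condition holds trivially, and the dual is as you wrote. The Gaussian step is sound: $\operatorname{tr}(\Sigma B^\tran B) = \E\|Bg\|_2^2$, the sign trick gives $\E\|B\epsilon\|_2^2 = (2/\pi)\operatorname{tr}(\arcsin[\Sigma]\,B^\tran B) \le M^2$, and $\arcsin[\Sigma] \succeq \Sigma$ via the nonnegative power-series coefficients together with the Schur product theorem closes the loop, since $\operatorname{tr}(PQ) \ge 0$ for PSD $P,Q$. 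The edge cases ($\mu_j = 0$ only when column $j$ of $B$ vanishes) are indeed harmless.

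The paper does not actually prove this theorem: it cites it as a known consequence of the Little Grothendieck theorem combined with Pietsch factorization, pointing to Tomczak-Jaegermann, Pisier, and Ledoux--Talagrand, and to Tropp (2009) for an algorithmic version. So you have supplied a proof where the paper supplies only a reference. That said, your proof is essentially a modern, self-contained packaging of exactly those two ingredients: the SDP duality step \emph{is} the Hahn--Banach / minimax core of Pietsch factorization, and the Gaussian-sign/arcsin/Schur-product computation \emph{is} the standard proof of the Little Grothendieck theorem with its constant $\sqrt{\pi/2}$. What your presentation buys is transparency and self-containedness -- the weights $\mu_j$ appear explicitly as an SDP optimizer rather than abstractly from a separating-functional argument -- which is also why the constant's origin (the $\arcsin$ series and the Schur product theorem) is visible. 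It is a perfectly legitimate route, and arguably closer in spirit to Tropp's algorithmic treatment than to the textbook functional-analytic one.
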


This result is a known combination of the Little Grothendieck Theorem (see \cite[Corollary~10.10]{Tomczak-Jaegermann1989} and \cite{Pisier2012}) and Pietsch Factorization (see \cite[Theorem~9.2]{Tomczak-Jaegermann1989}).
In an explicit form, a version of this result can be found e.g. in \cite[Proposition 15.11]{Ledoux&Talagrand1991}.
The weights $\mu_j$ can be computed algorithmically, see \cite{Tropp2009}.

The following related version of Grothendieck-Pietsch's factorization can be especially useful in
probabilistic contexts, see \cite[Proposition 15.11]{Ledoux&Talagrand1991}.
Here and in the rest of the paper, we denote
by $B_{I \times J}$ the sub-matrix of a matrix $B$ with rows indexed by a subset $I$ and columns
indexed by a subset $J$.

\begin{theorem}[Grothendieck-Pietsch factorization, sub-matrix version]		\label{thm: GP}
  Let $B$ be a $k \times m$ real matrix and $\d>0$. Then there exists
  $J \subseteq [m]$ with $|J| \ge (1-\d) m$ such that
  $$
  \| B_{[k] \times J}\| \le \frac{2 \| B \|_{\infty\rightarrow 2}}{ \sqrt{\d m} }.
  $$
\end{theorem}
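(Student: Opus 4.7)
The plan is to deduce Theorem~\ref{thm: GP} directly from the weighted version, Theorem~\ref{thm: GP factorization}, by selecting the columns on which the optimal weights $\mu_j$ are not abnormally large. This is a standard Markov-type selection: since the weights sum to one, very few of them can exceed a given threshold.

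First I would apply Theorem~\ref{thm: GP factorization} to produce weights $\mu_j \ge 0$ with $\sum_{j=1}^m \mu_j = 1$ satisfying $\|B D_\mu^{-1/2}\| \le \sqrt{\pi/2}\,\|B\|_{\infty \to 2}$. I would then define the ``light column'' set
$$
J := \big\{j \in [m] : \mu_j \le 1/(\d m)\big\}.
$$
Since every $j \notin J$ satisfies $\mu_j > 1/(\d m)$ and $\sum_j \mu_j = 1$, we get $|J^c| < \d m$, hence $|J| \ge (1-\d) m$, which is the required lower bound on the size of $J$.

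Next I would bound the spectral norm of $B_{[k] \times J}$ by exploiting the factorization through the diagonal operator. For any unit vector $x \in \R^m$ supported on $J$, I factor
$$
B x = \big(B D_\mu^{-1/2}\big)\big(D_\mu^{1/2} x\big),
$$
and use the definition of $J$ to bound
$$
\|D_\mu^{1/2} x\|_2^2 \;=\; \sum_{j \in J} \mu_j x_j^2 \;\le\; \frac{1}{\d m}\,\|x\|_2^2 \;=\; \frac{1}{\d m}.
$$
Combining this with the norm inequality from Theorem~\ref{thm: GP factorization} yields
$$
\|Bx\|_2 \;\le\; \|B D_\mu^{-1/2}\| \cdot \|D_\mu^{1/2} x\|_2 \;\le\; \frac{\sqrt{\pi/2}\,\|B\|_{\infty \to 2}}{\sqrt{\d m}} \;\le\; \frac{2\,\|B\|_{\infty \to 2}}{\sqrt{\d m}}.
$$
Taking the supremum over unit $x$ supported on $J$ gives the desired bound on $\|B_{[k] \times J}\|$.

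There is no real obstacle here: all the analytic difficulty is packaged inside Theorem~\ref{thm: GP factorization}, and the sub-matrix statement follows by nothing more than a one-line Markov argument plus the elementary factorization $Bx = (BD_\mu^{-1/2})(D_\mu^{1/2}x)$. The constant $2$ in the statement comfortably absorbs the $\sqrt{\pi/2} \approx 1.25$ coming from Grothendieck's inequality.
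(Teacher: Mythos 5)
Your proof is correct and follows essentially the same route as the paper: both apply Theorem~\ref{thm: GP factorization} to obtain weights, select the light-column set $J$ by the same Markov argument, and then pass the bound to $B_{[k]\times J}$ via the diagonal factorization through $D_\mu^{\pm 1/2}$ (you spell it out pointwise on vectors; the paper states it as an operator-norm inequality for the restricted diagonal matrix, which is the same estimate).
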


\begin{proof}
Consider the weights $\mu_i$ given by Theorem~\ref{thm: GP factorization},
and choose $J$ to consist of the indices $j$ satisfying $\mu_j \le 1/(\d m)$.
Since $\sum_j \mu_j = 1$, the set $J$ must contain at least $(1-\d) m$ indices as claimed.
Furthermore, the diagonal entries of $(D_\mu^{-1/2})_{J\times J}$
are all bounded from below by $\sqrt{ \d m}$, which yields
$$
\|(B D_\mu^{-1/2})_{[k] \times J}\| \ge \sqrt{\d m} \|B_{[k] \times J}\|.
$$
On the other hand, by \eqref{eq: GP factorization} the left-hand side of this inequality
is bounded by $2 \| B \|_{\infty\rightarrow 2}$. Rearranging the terms, we complete the proof.
\end{proof}

\subsection{Concentration on a big block}

We are starting to work toward constructing the core part $\NN$ in Theorem~\ref{thm: decomposition}.
In this section we will show how to find a big block on which the adjacency matrix $A$ concentrates.
First we will establish concentration in $\ell_\infty \to \ell_2$ norm, and then, using Grothendieck-Pietsch factorization, in the spectral norm.

The lemmas of this and next section should be best understood for $m=n$, $I = J = [n]$ and $\a=1$.
In this case, we are working with the entire adjacency matrix, and trying to make the first step
in the iterative procedure. The further steps will require us to handle smaller blocks $I \times J$;
the parameter $\a$ will then become smaller in order to achieve better concentration for smaller blocks.

\begin{lemma}[Concentration in $\ell_\infty\rightarrow \ell_2$ norm]	\label{lem: concentration in cut norm}
  Let $1 \le m \le n$ and $\a \ge m/n$.
  Then for $r \ge 1$ the following holds with probability at least $1-n^{-r}$.
  Consider a block $I \times J$ of size $m \times m$.
  Let $I'$ be the set of indices of the rows of $A_{I \times J}$ that contain at most $\a d$ ones.
  Then
  \begin{equation}\label{concentration in infinity to two norm}
    \|( A - \E A)_{I' \times J}\|_{\infty \rightarrow 2}
    \le C \sqrt{ \a d m r \log (e n/m) }.
  \end{equation}
\end{lemma}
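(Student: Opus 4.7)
For a fixed block $I\times J$ of size $m\times m$ and fixed sign pattern $x\in\{-1,1\}^J$, I would decompose
$$
\|(A-\E A)_{I'\times J}\, x\|_2^2 = \sum_{i\in I'} S_i(x)^2, \qquad S_i(x):=\sum_{j\in J}(A_{ij}-p_{ij})\,x_j,
$$
and exploit two properties of each row sum. First, $S_i(x)$ is a sum of independent centered random variables with variance at most $\sum_{j}p_{ij}(1-p_{ij})\le md/n\le \alpha d$ (using $\alpha\ge m/n$) and summands in $[-1,1]$, so Bernstein's inequality supplies a sub-exponential tail with sub-Gaussian parameter $\alpha d$. Second, the very definition of $I'$ yields the deterministic bound $|S_i(x)|\le d_i + \sum_j p_{ij}\le 2\alpha d$ whenever $i\in I'$.

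To convert the $\ell_2$ norm of the vector $(S_i(x))_{i\in I'}$ into a scalar bilinear form, I would introduce a $\tfrac12$-net $\NN$ of the unit sphere in $\R^{|I|}$, of cardinality at most $5^m$, apply $\|v\|_2\le 2\sup_{y\in\NN}\ip{y}{v}$, and use that restricting to rows in $I'$ only decreases the $\ell_\infty\to\ell_2$ norm. For a fixed pair $(x,y)$ with $\|y\|_2=1$, the scalar $T(x,y):=\sum_{(i,j)\in I\times J} y_i x_j(A_{ij}-p_{ij})$ is a sum of independent centered variables with total variance at most $\alpha d$ and each summand bounded by $\|y\|_\infty$, so Bernstein gives $\Pr{|T|>t}\le 2\exp(-ct^2/(\alpha d+\|y\|_\infty\, t))$. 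A union bound over $x\in\{-1,1\}^m$, $y\in\NN$, and blocks $I,J$ of size $m$ (jointly contributing $\binom{n}{m}^2\le(en/m)^{2m}$ configurations) incurs a multiplicative entropy factor of order $e^{Cm\log(en/m)}$, so choosing $t\sim\sqrt{\alpha d\,rm\log(en/m)}$ with a sufficiently large absolute constant drives the total probability below $n^{-r}$ in the sub-Gaussian regime of Bernstein, uniformly over all blocks of size $m\times m$.

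The main obstacle is that for worst-case $y$ with $\|y\|_\infty=1$, the sub-exponential part of Bernstein's bound produces an additional additive contribution of order $rm\log(en/m)$, which exceeds the target whenever $\alpha d\lesssim rm\log(en/m)$. This is exactly the point where the restriction to $I'$ becomes indispensable, and I would handle it through a dichotomy on $\|y\|_\infty$ at the threshold $\mu:=\sqrt{\alpha d/(rm\log(en/m))}$. For $y\in\NN$ with $\|y\|_\infty\le\mu$ the Bernstein bound becomes effectively sub-Gaussian (the $\|y\|_\infty t$ term is absorbed), giving $|T|\lesssim\sqrt{\alpha d\,rm\log(en/m)}$. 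For $y\in\NN$ with $\|y\|_\infty>\mu$, the vector $y$ is supported on at most $1/\mu^2$ coordinates, so the deterministic bound $|S_i(x)|\le 2\alpha d$ for $i\in I'$ gives
$$
|T|\;\le\;2\alpha d\,\|y\|_1\;\le\;2\alpha d/\mu\;\lesssim\;\sqrt{\alpha d\,rm\log(en/m)}
$$
with no probabilistic input at all. Combining the two cases under a single union bound yields the uniform $\ell_\infty\to\ell_2$ concentration bound claimed in the lemma, with probability at least $1-n^{-r}$.
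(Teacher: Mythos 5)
There is a genuine gap in the way the light/heavy dichotomy interacts with the restriction to the random index set $I'$. The target quantity is $\|(A-\E A)_{I'\times J}\|_{\infty\to 2}$, and after the net step it becomes $\sup_{x,y}\sum_{i\in I'}y_iS_i(x)$: the row sum must run over $I'$, not over all of $I$, because otherwise the quantity can be as large as $\Theta(m)$ due to high-degree rows (which is precisely why the lemma restricts to $I'$). But your bilinear form $T(x,y)$ is defined as a sum over $(i,j)\in I\times J$. This is the crux of the problem. For the heavy regime you invoke the deterministic bound $|S_i(x)|\le 2\alpha d$, which is only valid for $i\in I'$ — so there you need the sum over $I'$. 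For the light regime you invoke Bernstein over edges, which requires the summands $y_ix_j(A_{ij}-p_{ij})$ to be independent — so there you need the sum over $I$ (inserting the indicator $\xi_i=\one_{\{i\in I'\}}$ ties together all edges in row $i$, the summands are no longer independent across $(i,j)$, and if one instead sums over rows, the per-summand bound deteriorates from $\|y\|_\infty$ to $\|y\|_\infty\cdot\alpha d$, ruining the sub-exponential term). You cannot have both at once, and the seemingly innocuous remark that ``restricting to rows in $I'$ only decreases the $\ell_\infty\to\ell_2$ norm'' does not resolve this: that monotonicity holds at the level of norms, not term by term in the bilinear expansion, so it does not let you smuggle the $I$-sum bound for light $y$ into a bound on the $I'$-sum. (There is also a smaller slip: $\|y\|_\infty>\mu$ does not make $y$ supported on $\le 1/\mu^2$ coordinates — you would need to split $y=y^{(1)}+y^{(2)}$ by thresholding the coordinates at $\mu$ — but that is a repair you could make.)

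The paper circumvents this tension entirely by not introducing a net. It writes $\sum_{i\in I'}S_i(x)^2=\sum_{i\in I}\bigl(S_i(x)\xi_i\bigr)^2$ and observes that each $S_i(x)\xi_i$ depends only on row $i$ of $A$, so the summands $\bigl(S_i(x)\xi_i\bigr)^2$ are genuinely independent; moreover, combining Bernstein for $S_i(x)$ with the deterministic truncation $|S_i(x)\xi_i|\le\alpha d$ shows each $S_i(x)\xi_i$ is sub-gaussian with parameter $\sqrt{\alpha d}$, hence each square is sub-exponential with parameter $\alpha d$. Bernstein for sums of sub-exponentials then gives the desired bound directly, with the union bound over $x\in\{-1,1\}^m$ and blocks $I,J$ — no supremum over $y$, no light/heavy split, and no issue with the randomness of $I'$. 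Your approach would need some additional mechanism (decoupling, conditioning, or a careful combinatorial control of $\sum_{i\in I\setminus I'}|S_i(x)|$, which is not bounded by any quantity small enough in general) to close the light-part gap; as written, the argument does not go through.
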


\begin{proof}
By definition,
\begin{equation}\label{block error in infinity to two norm}
\|( A - \E A)_{I' \times J}\|_{\infty \to 2}^2
= \max_{x \in \{-1,1\}^m} \sum_{i\in I'} \Big( \sum_{j \in J} (A_{ij} - \E A_{ij}) x_j \Big)^2
= \max_{x \in \{-1,1\}^m } \sum_{i\in I} \big( X_i \xi_i \big)^2
\end{equation}
where we denoted
$$
X_i := \sum_{j \in J} (A_{ij} - \E A_{ij}) x_j, \qquad
\xi_i := \one_{ \left\{ \sum_{i \in J} A_{ij} \le \a d \right\} }.
$$

Let us first fix a block $I \times J$ and a vector $x \in \{-1,1\}^m$. Let us analyze the
independent random variables $X_i \xi_i$.
Since $|X_i| \le \sum_{j \in J} |A_{ij} - \E A_{ij}| \le \sum_{j \in J} A_{ij}$, it follows by
definition of $\xi_i$ that
\begin{equation}         \label{eq: Xi xi bounded}
|X_i \xi_i| \le \a d.
\end{equation}

A more useful bond will follow from Bernstein's inequality.
Indeed, $X_i$ is a sum of $m$ independent random variables
with zero means and variances at most $d/n$.
By Bernstein's inequality, for any $t>0$ we have
\begin{equation}\label{bound on column sum of error}
  \Pr{ |X_i \xi_i| > t m }
  \le \Pr{ |X_i| > t m }
  \le 2 \exp \left( \frac{ - m t^2 / 2 }{ d/n + t/3 } \right), \quad t \ge 0.
\end{equation}
For $tm \le \a d$, this can be further bounded by $2\mathrm{exp}(-m^2 t^2 / 4 \a d)$,
once we use the assumption $\a \ge m/n$.
For $tm > \a d$, the left-hand side of \eqref{bound on column sum of error} is
automatically zero by \eqref{eq: Xi xi bounded}.
Therefore
\begin{equation}\label{subgaussian bound}
  \Pr{ |X_i \xi_i| > t m }
  \le 2 \ \mathrm{ exp} \left( \frac{ - m^2 t^2}{ 4 \a d} \right), \qquad t \ge 0.
\end{equation}

We are now ready to bound the right-hand side of \eqref{block error in infinity to two norm}.
By \eqref{subgaussian bound}, the random variable $X_i \xi_i$ is sub-gaussian\footnote{For
  definitions and basic facts about sub-gaussian random variables, see e.g. \cite{v-rmt-tutorial}.}
with sub-gaussian norm at most $\sqrt{\a d}$.
It follows that $(X_i \xi_i)^2$ is sub-exponential with sub-exponential norm
at most $2 \a d$.
Using Bernstein's inequality for sub-exponential random variables
(see Corrollary~5.17 in \cite{v-rmt-tutorial}), we have
\begin{equation}\label{eq: sum (X_ixi_i) square}
  \Pr{ \sum_{i \in I} \big(X_i \xi_i \big)^2 > \e m \a d }
  \le 2 \ \mathrm{exp} \left[ - c \min \left( \e^2 , \e \right)m\right], \qquad \e \ge 0.
\end{equation}
Choosing $\e := (10/c) r \log(en/m)$, we bound this probability by $(en/m)^{-5rm}$.

Summarizing, we have proved that for fixed $I,J \subseteq [n]$ and $x \in \{-1,1\}^m$,
with probability at least $1-(en/m)^{-5rm}$, the following holds:
\begin{equation}\label{final bound on sum of exponential}
  \sum_{i \in I} \big(X_i \xi_i \big)^2 \le (10/c) r \log(en/m) \cdot m \a d.
\end{equation}
Taking a union bound over all possibilities of $m,I,J,x$ and using
\eqref{block error in infinity to two norm}, \eqref{final bound on sum of exponential},
we see that the conclusion of the lemma holds with probability at least
\begin{equation*}
  1 - \sum_{m=1}^n 2^m \binom{n}{m}^2  \left( \frac{en}{m} \right)^{-5rm}
  \ge 1 - n^{-r}.
\end{equation*}
The proof is complete.
\end{proof}

Applying Lemma~\ref{lem: concentration in cut norm}
followed by Grothendieck-Piesch factorization (Theorem~\ref{thm: GP}),
we obtain the following.

\begin{lemma}[Concentration in spectral norm]    \label{lem: concentration in spectral norm}
  Let $1 \le m \le n$ and $\a \ge m/n$.
  Then for $r \ge 1$ the following holds with probability at least $1-n^{-r}$.
  Consider a block $I \times J$ of size $m \times m$.
  Let $I'$ be the set of indices of the rows of $A_{I \times J}$ that contain at most $\a d$ ones.
  Then one can find a subset $J^\prime \subseteq J$ of at least $3m/4$ columns such that
  \begin{equation}\label{eq: concentration in spectral norm}
    \|( A - \E A)_{I^\prime \times J^\prime}\|
    \le C \sqrt{ \a d r \log (e n/m) }.
  \end{equation}
\end{lemma}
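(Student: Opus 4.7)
The plan is to chain Lemma~\ref{lem: concentration in cut norm} (which controls $\|(A-\E A)_{I' \times J}\|_{\infty \to 2}$ for every block $I \times J$ of size $m \times m$) with Grothendieck-Pietsch factorization (Theorem~\ref{thm: GP}, the sub-matrix version), which converts an $\ell_\infty \to \ell_2$ bound on a matrix into a spectral-norm bound on a large sub-matrix obtained by deleting a few of its columns. Crucially, Lemma~\ref{lem: concentration in cut norm} already produces a \emph{single} event of probability $\ge 1 - n^{-r}$ on which the $\ell_\infty \to \ell_2$ estimate is valid uniformly over every choice of $I \times J$, so no further union bound is needed.

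First I would fix a realization of $A$ in the success event of Lemma~\ref{lem: concentration in cut norm}. Given any block $I \times J$ of size $m \times m$, let $I' \subseteq I$ be the rows of $A_{I \times J}$ with at most $\a d$ ones. Lemma~\ref{lem: concentration in cut norm} gives
\begin{equation*}
\|(A-\E A)_{I' \times J}\|_{\infty \to 2} \le C \sqrt{\a d m r \log(en/m)}.
\end{equation*}
Now apply Theorem~\ref{thm: GP} to the $|I'| \times m$ matrix $B := (A-\E A)_{I' \times J}$ with $\d = 1/4$. This produces a subset $J' \subseteq J$ with $|J'| \ge (1-\d) m = 3m/4$ such that
\begin{equation*}
\|B_{I' \times J'}\| \;\le\; \frac{2 \|B\|_{\infty \to 2}}{\sqrt{\d m}} \;=\; \frac{4 \|B\|_{\infty \to 2}}{\sqrt{m}} \;\le\; 4C \sqrt{\a d r \log(en/m)},
\end{equation*}
which is exactly the desired bound \eqref{eq: concentration in spectral norm} (after absorbing constants).

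There is really no obstacle here: the lemma is a formal corollary of the two preceding results. The only small point worth emphasizing is that the success event we use is the \emph{uniform} event from Lemma~\ref{lem: concentration in cut norm}, so the same sample-space event allows us to produce a good $J'$ simultaneously for every block $I \times J$ of size $m \times m$; this is essential for the iterative construction outlined in Section~\ref{s: outline}, where the blocks on which we invoke this lemma are themselves random (they depend on earlier stages of the decomposition). The factor $\sqrt{m}$ in the denominator of the Grothendieck--Pietsch bound is precisely what cancels the $\sqrt{m}$ inside the $\ell_\infty \to \ell_2$ estimate, leaving the clean $\sqrt{\a d r \log(en/m)}$ rate.
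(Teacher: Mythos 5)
Your proof is correct and matches the paper's approach exactly: the paper itself states that Lemma~\ref{lem: concentration in spectral norm} follows by applying Lemma~\ref{lem: concentration in cut norm} and then Grothendieck--Pietsch factorization (Theorem~\ref{thm: GP}), and your choice of $\delta = 1/4$ with the $\sqrt{m}$ cancellation is precisely the intended computation. Your remark about the uniform-in-$(I,J)$ success event being the one inherited from Lemma~\ref{lem: concentration in cut norm} is also the right observation and is what makes the iterative decomposition work.
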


\subsection{Restricted degrees}

The two simple lemmas of this section will help us to handle the
part of the adjacency matrix outside the core block constructed in
Lemma~\ref{lem: concentration in spectral norm}.
First, we show that almost all rows have at most $O(\a d)$ ones,
and thus are included in the core block.

\begin{lemma}[Degrees of subgraphs]			\label{lem: degrees of subgraphs}
  Let $1 \le m \le n$ and $\a \ge \sqrt{m/n}$.
  Then for $r \ge 1$ the following holds with probability at least $1-n^{-r}$.
  Consider a block $I \times J$ of size $m \times m$.
  Then all but $m/\a d$ rows of $A_{I \times J}$ have at most $8r\a d$ ones.
\end{lemma}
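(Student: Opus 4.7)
The plan is a reduction plus a double union bound, exploiting the fact that ``being heavy'' is a property of the column set $J$ alone.

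First I would observe that whether a row $i$ of $A_{I\times J}$ has more than $8r\a d$ ones depends only on
$$
d_i(J) := \sum_{j \in J} A_{ij},
$$
not on $I$. So if $T_J := \{i \in [n] : d_i(J) > 8r\a d\}$, then the count of heavy rows in any block $I\times J$ is $|I \cap T_J| \le |T_J|$, and it suffices to show, uniformly in $J \subset [n]$ with $|J|=m$, that $|T_J| \le m/(\a d)$ with probability at least $1 - n^{-r}$. The cases $\a d < 1$ and $8r\a d \ge m$ are trivial, since then $m/(\a d) \ge m$ or each row has automatically at most $8r\a d$ ones; so I may assume $1 \le \a d$ and $8r\a d < m$.

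For a fixed $J$, the rows of $A$ are independent, so the $d_i(J)$ are independent binomial-type sums with $\E[d_i(J)] \le md/n \le \a^2 d$, where the last inequality uses the hypothesis $\a \ge \sqrt{m/n}$. A standard Chernoff bound, valid because $8r\a d \ge e \cdot md/n$ (again by the hypothesis), yields
$$
p := \Pr{d_i(J) > 8r\a d} \le \Bigl(\frac{em}{8 r \a n}\Bigr)^{8r\a d} \le \Bigl(\frac{e\sqrt{m/n}}{8r}\Bigr)^{8r\a d},
$$
where the second inequality again uses $\a \ge \sqrt{m/n}$. Since $|T_J|$ is stochastically dominated by $\mathrm{Bin}(n,p)$, the elementary bound $\Pr{\mathrm{Bin}(n,p) \ge K} \le \binom{n}{K} p^K$ with $K = \lfloor m/(\a d)\rfloor + 1$, together with a union bound over the $\binom{n}{m}$ choices of $J$, gives
$$
\Pr{\exists J : |T_J| > m/(\a d)} \le \binom{n}{m}\binom{n}{K} p^K \le \Bigl(\frac{en}{m}\Bigr)^m \Bigl(\frac{en\a d}{m}\Bigr)^K p^K.
$$

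What remains is arithmetic: showing this product is at most $n^{-r}$ uniformly in $m \in [1,n]$ and $r \ge 1$. The key identity making the estimate close is $8r\a d \cdot K \ge 8rm$, which lifts $p^K$ to at least $\beta^{8rm}$ with $\beta := e\sqrt{m/n}/(8r)$. Writing $\log(1/\beta) = \log(8r/e) + \tfrac12\log(n/m)$, the $\tfrac12\log(n/m)$ piece, multiplied by $8rm$, absorbs the polynomial union-bound factor $(en/m)^m$ (since $4rm\log(n/m)$ beats $m\log(en/m)$ for $r \ge 1$), while the $\log(8r/e) \ge \log(8/e)$ piece contributes a further $\Theta(rm)$ negative term that comfortably beats the $(en\a d/m)^K$ factor and the target $r\log n$. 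The main obstacle is precisely this bookkeeping, and in particular verifying that the constants and the edge cases (small $m$ or $\a d$ close to $1$) do not degenerate; the hypothesis $\a \ge \sqrt{m/n}$ is calibrated exactly to make the $m$-dependence in $\beta$ line up with the $m$-dependence in the union bound.
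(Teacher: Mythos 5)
Your proposal is correct and follows essentially the same route as the paper: a per-row Chernoff bound (using $\E d_i(J) \le md/n$ and the hypothesis $\a \ge \sqrt{m/n}$ to control the deviation probability $p$), a second tail bound to control the count of heavy rows, and a union bound over blocks. The one genuine (if modest) improvement you make is the observation that heaviness of row $i$ depends only on the column set $J$, so it suffices to bound $|T_J|$ over all of $[n]$ uniformly in $J$, and the union bound shrinks from $\binom{n}{m}^2$ (over $I$ and $J$, as in the paper) to $\binom{n}{m}$ (over $J$ alone); this costs you a binomial with $n$ rather than $m$ trials when controlling the number of heavy rows, but since the threshold $K \approx m/(\a d)$ is much smaller than $m$ in the nontrivial regime $8r\a d < m$, the savings on the union bound dominates. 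Your handling of the trivial cases $\a d < 1$ and $8r\a d \ge m$ is also cleaner than the paper, which leaves them implicit. The remaining arithmetic is, as you say, bookkeeping: the decisive facts are that $\a d K \ge m$ and that $\log(1/\beta) \ge (1 - \log 8) + \tfrac12\log(n/m)$, whose product with $8rm$ swallows both $\log\binom{n}{m} \le m\log(en/m)$ and $\log\binom{n}{K} \le K(1+\log(n/m)+\log(\a d))$, the latter bounded by a constant times $m$ since $K\log(\a d) \le (9/8)\,m\,\sup_{x\ge 1}(\log x)/x$. So this is a sound, slightly streamlined version of the paper's own argument rather than a different method.
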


\begin{proof}
Fix a block $I \times J$, and
denote by $d_i$ the number of ones in the $i$-th row of $A_{I \times J}$.
Then $\E d_i \leq md/n$ by the assumption.
Using Chernoff's inequality, we obtain
$$
\Pr{d_i> 8r \a d} \le \Big( \frac{ 8r \a d }{ emd/n } \Big) ^ {-8r\a d}
\le \Big( \frac{ 2\a n }{ m } \Big)^{-8r\a d} =: p.
$$
Let $S$ be the number of rows $i$ with $d_i > 8r \a d$.
Then $S$ is a sum of $m$ independent Bernoulli random variables
with expectations at most $p$.
Again, Chernoff's inequality implies
$$
\Pr{S > m/ \a d}
\le (e p \a d)^ {m/\a d}
\le p^{m / 2 \a d} = \Big( \frac{ 2\a n }{ m } \Big)^{-4rm}.
$$
The second inequality here holds since $e \a d \le p^{-1/2}$.
(To see this, notice that the definition of $p$ and assumption on $\a$ imply that
$p^{-1/2} = (2 \a n / m)^{4 r \a d} \ge 2^{4 r \a d}$.)

It remains to take a union bound over all blocks $I \times J$.
We obtain that the conclusion of the lemma holds with probability at least
$$
1-\sum_{m=1}^n \binom{n}{m}^2 \Big( \frac{ 2\a n }{ m } \Big)^{-4rm}
\ge 1 - n^{-r}.
$$
In the last inequality we used the assumption that $\a \ge \sqrt{m/n}$.
The proof is complete.
\end{proof}

Next, we handle the block of rows that do have too many ones.
We show that most {\em columns} of this block have $O(1)$ ones.

\begin{lemma}[More on degrees of subgraphs]	\label{lem: more degrees of subgraphs}
  Let $1 \le m \le n$ and $\a \ge \sqrt{m/n}$.
  Then for $r \ge 1$ the following holds with probability at least $1-n^{-r}$.
  Consider a block $I \times J$ of size $k \times m$ with some $k \le m / \a d$.
  Then all but $m/4$ columns of $A_{I \times J}$ have at most $32r$ ones.
\end{lemma}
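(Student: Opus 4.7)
The plan is to mirror the proof of Lemma~\ref{lem: degrees of subgraphs}, but with the roles of rows and columns interchanged. The crucial quantitative observation is that the hypotheses $k \le m/(\alpha d)$ and $\alpha \ge \sqrt{m/n}$ together force the expected number of ones in each column of $A_{I \times J}$ to be at most $kd/n \le m/(\alpha n) \le \sqrt{m/n} \le 1$, so typical columns are essentially empty even though no \emph{a priori} sparsity is imposed on $d/n$.

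Fix a block $I \times J$ of size $k \times m$ and let $c_j$ denote the number of ones in column $j$ of $A_{I \times J}$. Each $c_j$ is a sum of $k$ independent Bernoulli variables with parameters at most $d/n$, and the family $(c_j)_{j \in J}$ is mutually independent, since different columns of $A$ involve disjoint sets of directed edges. Chernoff's inequality for binomial tails therefore gives
$$
p := \Pr{c_j > 32 r} \le \Big( \frac{e\, \E c_j}{32r} \Big)^{32r} \le \Big( \frac{e\sqrt{m/n}}{32r} \Big)^{32r}.
$$
Let $S := |\{ j \in J : c_j > 32 r \}|$. Since $S$ is stochastically dominated by a $\mathrm{Binomial}(m, p)$ random variable, a second Chernoff bound yields
$$
\Pr{S > m/4} \le \binom{m}{\lceil m/4 \rceil} p^{m/4} \le (4 e p)^{m/4}.
$$

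It then remains to take a union bound over blocks $I \times J$: sum over $m \in \{1, \dots, n\}$, over $k \in \{1, \dots, \lfloor m/(\alpha d) \rfloor\}$, and over the $\binom{n}{k}\binom{n}{m} \le (en/k)^k (en/m)^m$ choices of $(I, J)$. The decisive factor is $p^{m/4} \le (e\sqrt{m/n}/(32r))^{8 r m}$, which provides two separate sources of smallness: a huge scalar factor $(1/(32r))^{8 r m}$ (harmless already for $r \ge 1$) and a polynomial factor $(m/n)^{4 r m}$ that is pitted directly against the combinatorial volumes $(en/m)^m$ and $(en/k)^k$. Straightforward algebra, entirely analogous to the final computation in the proof of Lemma~\ref{lem: degrees of subgraphs}, shows that the sum of all of these contributions is at most $n^{-r}$.

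The main obstacle is the bookkeeping in this union bound over $k$: when $\alpha d$ is small, $k$ can be as large as $\sqrt{m n}/d$ (indeed larger than $m$), so the factor $\binom{n}{k}$ cannot be discarded cheaply. One must rely on the polynomial savings $(m/n)^{4 r m}$ produced by the hypothesis $\alpha \ge \sqrt{m/n}$ (entering through the bound $\E c_j \le \sqrt{m/n}$) to absorb $\binom{n}{k}$, and this is precisely where all the quantitative hypotheses of the lemma are simultaneously consumed. Apart from this balancing, the proof is mechanical: two Chernoff bounds followed by a union bound.
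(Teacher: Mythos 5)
Your proposal mirrors the paper's proof exactly: a Chernoff bound on the tail of a single column count, a second Chernoff bound for the number of bad columns (stochastically dominated by a binomial), and a union bound over blocks, with the hypothesis $\alpha \ge \sqrt{m/n}$ doing all the quantitative work. The one place the paper is cleaner than your outline is the union bound over $k$: rather than summing over $k \in \{1,\dots,\lfloor m/(\alpha d)\rfloor\}$ and fighting the resulting $\binom{n}{k}$ factors, the paper observes that column counts are monotone under row deletion, so it suffices to verify the event only for blocks with the \emph{maximal} number of rows $k = \lceil m/(\alpha d)\rceil$ --- this removes the extra sum over $k$ that you flag as the main obstacle, and makes the final bookkeeping (which you leave as ``straightforward algebra'') genuinely mechanical.
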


\begin{proof}
Fix $I$ and $J$, and
denote by $d_j$ the number of ones in the $j$-th column of $A_{I \times J}$.
Then $\E d_j\leq kd/n \le m/ \a n$ by assumption.
Using Chernoff's inequality, we have
$$
\Pr{d_j > 32r} \le \Big( \frac{32r}{ e m / \a n} \Big)^{-32r}
\le \Big( \frac{10\a n}{ m } \Big)^{-32r} =: p.
$$
Let $S$ be the number of columns $j$ with $d_j>32r$.
Then $S$ is a sum of $m$ independent Bernoulli random variables
with expectations at most $p$.
Again, Chernoff's inequality implies
$$
\Pr{S > m/4} \le (4ep)^{m/4} \le p^{m/6} \le \Big( \frac{10\a n}{ m } \Big)^{-5rm}.
$$
The second inequality here holds since $4e < p^{1/2}$, which in turn follows by assumption on $\a$.

It remains to take a union bound over all blocks $I \times J$. It is enough to consider the blocks
with largest possible number of columns, thus with $k = \lceil m/\a d \rceil$.
We obtain that the conclusion of the lemma holds with probability at least
$$
1-\sum_{m=1}^n \binom{n}{m}\binom{n}{\lceil m/ \a d \rceil} \Big( \frac{10\a n}{ m } \Big)^{-5rm}
\le 1-n^{-r}.
$$
In the last inequality we used the assumption that $\a \ge \sqrt{m/n}$.
The proof is complete.
\end{proof}

\subsection{Iterative decomposition: proof of Theorem~\ref{thm: main formal}}

Finally, we combine the tools we developed so far, and we construct an iterative
decomposition of the adjacency matrix the way we outline in Section~\ref{s: outline}.
Let us set up one step of this procedure, where we consider an $m \times m$ block
and decompose almost all of it (everything except an $m/2 \times m/2$ block)
into classes $\NN$, $\RR$ and $\CC$ satisfying the conclusion of Theorem~\ref{thm: decomposition}.
Once we can do this, we repeat the procedure for the $m/2 \times m/2$ block, etc.

\begin{lemma}[Decomposition of a block]					\label{lem: block decomposition}
  Let $1 \le m \le n$ and $\a \ge \sqrt{m/n}$.
  Then for $r \ge 1$ the following holds with probability at least $1-3n^{-r}$.
  Consider a block $I \times J$ of size $m \times m$.
  Then there exists an exceptional sub-block $I_1 \times J_1$ with dimensions at most $m/2 \times m/2$
  such that the remaining part of the block, that is $(I \times J) \setminus (I_1 \times J_1)$,
  can be decomposed into three classes $\NN$, $\RR \subset (I \setminus I_1) \times J$ and
  $\CC \subset I \times (J \setminus J_1)$ so that the following holds.
  \begin{itemize}
    \item The graph concentrates on $\NN$, namely
    $\|(A - \E A)_\NN\| \le C r^{3/2} \sqrt{\a d \log(en/m)}$.
    \item Each row of $A_\RR$ and each column of $A_\CC$ has at most $32r$ ones.
  \end{itemize}
  Moreover, $\RR$ intersects at most $n / \a d$ columns
  and $\CC$ intersects at most $n / \a d$ rows of $I \times J$.
\end{lemma}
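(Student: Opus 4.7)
The plan is to realize the informal outline of Section~\ref{s: outline} inside the given block $I\times J$. First, I invoke Lemma~\ref{lem: degrees of subgraphs} (and its transposed counterpart applied to $A^\tran$) to extract the ``heavy'' row set $I_b\subseteq I$ and column set $J_b\subseteq J$ of indices whose corresponding row/column of $A_{I\times J}$ carries more than $8r\a d$ ones; each has size at most $m/\a d$. Writing $I^\ast:=I\setminus I_b$ and $J^\ast:=J\setminus J_b$, I then apply Lemma~\ref{lem: concentration in spectral norm} with the inflated parameter $\tilde\a:=8r\a$ (which still satisfies $\tilde\a\ge m/n$) to produce a column subset $J'\subseteq J$ of size at least $3m/4$ satisfying
\[
\|(A-\E A)_{I^\ast\times J'}\|\le Cr\sqrt{\a d\log(en/m)},
\]
together with its row analogue $I''\subseteq I$, $|I''|\ge 3m/4$, obeying the same bound on $\|(A-\E A)_{I''\times J^\ast}\|$.

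Second, to secure the sparsity guarantees along the slim directions, I apply Lemma~\ref{lem: more degrees of subgraphs} to the block $I_b\times J$ (of dimensions at most $m/\a d\times m$) to obtain a set $J_1'\subseteq J$ of size at most $m/4$ outside of which every column of $A_{I_b\times J}$ has at most $32r$ ones, and symmetrically, via transposition, to the block $I\times J_b$ to obtain $I_1'\subseteq I$ of size at most $m/4$ outside of which every row of $A_{I\times J_b}$ has at most $32r$ ones. A union bound over the few auxiliary failure events, with absolute constants absorbed, gives the claimed probability $1-3n^{-r}$.

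Third, I define
\[
I_1:=(I\setminus I'')\cup I_1',\qquad J_1:=(J\setminus J')\cup J_1',
\]
both of cardinality at most $m/4+m/4=m/2$, and set
\[
\NN:=(I^\ast\times J')\cup(I''\times J^\ast),\quad \RR:=(I\setminus I_1)\times J_b,\quad \CC:=I_b\times(J\setminus J_1).
\]
A short case analysis confirms that $\NN\cup\RR\cup\CC=(I\times J)\setminus(I_1\times J_1)$: an edge $(i,j)\notin\NN$ either has $i\in I\setminus I''$ and $j\in J\setminus J'$ (so it lies inside $I_1\times J_1$), or it lies in $(I_b\times J)\cup(I\times J_b)$, in which case escaping $I_1\times J_1$ forces $j\notin J_1$ (placing it in $\CC$) whenever $i\in I_b$, or $i\notin I_1$ (placing it in $\RR$) whenever $j\in J_b$. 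The $32r$-sparsity of the rows of $A_\RR$ and the columns of $A_\CC$ is immediate from the construction of $I_1'$ and $J_1'$, and the estimates $|I_b|,|J_b|\le m/\a d\le n/\a d$ come directly from Lemma~\ref{lem: degrees of subgraphs}. Concentration on $\NN$ follows from the triangle inequality once one decomposes the non-product difference $(I''\times J^\ast)\setminus(I^\ast\times J')$ into the two product subsets $(I''\setminus I^\ast)\times J^\ast$ and $I''\times(J^\ast\setminus J')$, both contained in $I''\times J^\ast$; restricting to product subsets can only decrease the spectral norm, so the first-paragraph bounds yield $\|(A-\E A)_\NN\|\le Cr\sqrt{\a d\log(en/m)}\le Cr^{3/2}\sqrt{\a d\log(en/m)}$.

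The main obstacle is the combinatorial bookkeeping. Four different ``bad'' set-size budgets must simultaneously (i) collectively fit within an $m/2\times m/2$ exceptional block and (ii) cover every edge of $(I\times J)\setminus\NN$ that escapes that block, using only the sparsity-constrained pieces $\RR$ and $\CC$. Matching the Grothendieck-Pietsch defects $I\setminus I''$ and $J\setminus J'$ (each of size at most $m/4$) with the crossing defects $I_1'$ and $J_1'$ (each of size at most $m/4$) is what keeps both axes of $I_1\times J_1$ below $m/2$; once that assignment is made, every remaining verification reduces to restricting to a product subset and one application of the triangle inequality.
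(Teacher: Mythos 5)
Your proof follows essentially the same route as the paper's: extract the heavy row/column sets via Lemma~\ref{lem: degrees of subgraphs}, get the Grothendieck--Pietsch column and row subsets via Lemma~\ref{lem: concentration in spectral norm} with the inflated parameter $8r\a$, handle the slim blocks with Lemma~\ref{lem: more degrees of subgraphs}, and paste the pieces together. The bounds, the $32r$-sparsity, and the $n/\a d$-width claims all come out correctly.

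One bookkeeping imprecision is worth flagging. You assert equality $\NN\cup\RR\cup\CC=(I\times J)\setminus(I_1\times J_1)$, but with your definitions $\NN$ can intrude into $I_1\times J_1$: the GP column set $J'$ may intersect $J_1'$ (hence $J_1$), and $I^\ast$ may intersect $I\setminus I''$ (hence $I_1$), so $I^\ast\times J'$ need not avoid the exceptional block. What you actually prove is the inclusion $\supseteq$. The paper avoids this by intersecting from the start, defining the core pieces as $I'\times(J\setminus J_1)$ and $(I\setminus I_1)\times J'$, which are disjoint from $I_1\times J_1$ by construction. Your version is fixable --- replace $\NN$ by $\NN\cap\bigl((I\times J)\setminus(I_1\times J_1)\bigr)$ and note this set is still a union of a bounded number of product rectangles contained in $I^\ast\times J'$ or $I''\times J^\ast$, so the restriction property and triangle inequality preserve the spectral norm bound up to a constant --- but as written the equality claim and the accompanying sentence ``escaping $I_1\times J_1$ forces $j\notin J_1$'' are not correct (escaping a product set only forces at least one coordinate to escape, not a particular one).

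Separately, both you and the paper tacitly apply each of the three ingredient lemmas to $A$ and to $A^\tran$, which nominally doubles the number of failure events; the stated $1-3n^{-r}$ is a harmless slack in the constants.
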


After a permutation of rows and columns, a decomposition of the block
stated in Lemma~\ref{lem: block decomposition}
can be visualized in Figure~\ref{fig: block-decomposition-3}.

\begin{figure}[htp]			
  \centering
  \begin{subfigure}[b]{0.3\textwidth}
    \includegraphics[width=0.8\textwidth]{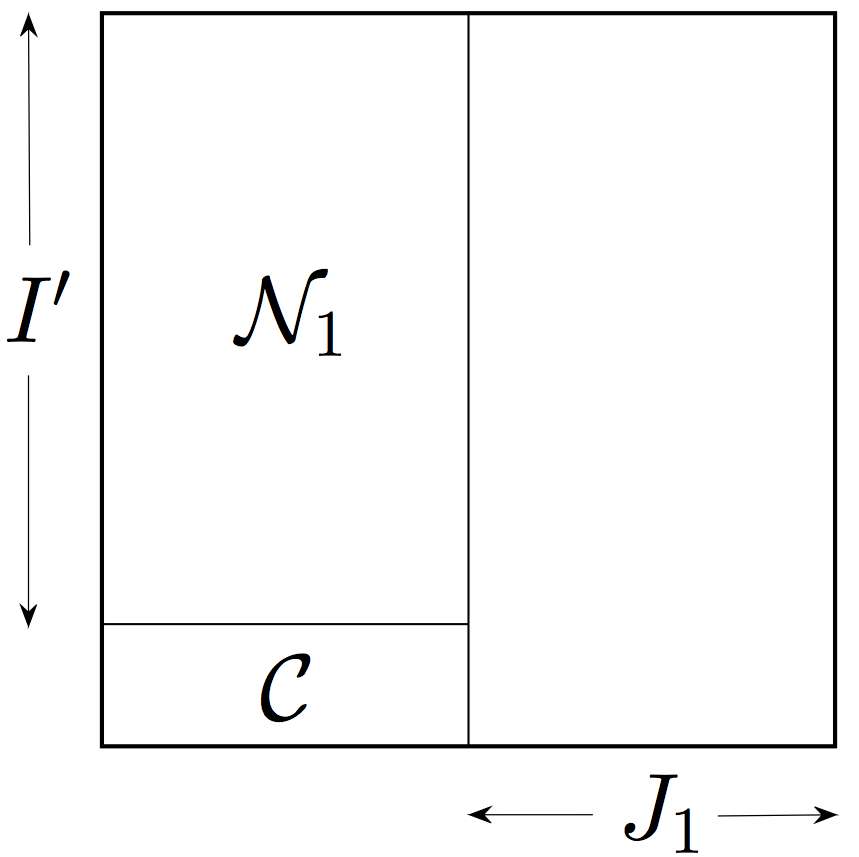}
    \caption{Initial step.}
    \label{fig: block-decomposition-1}
  \end{subfigure}
  \quad
  \begin{subfigure}[b]{0.3\textwidth} \qquad
    \raisebox{12pt}{\includegraphics[width=0.8\textwidth]{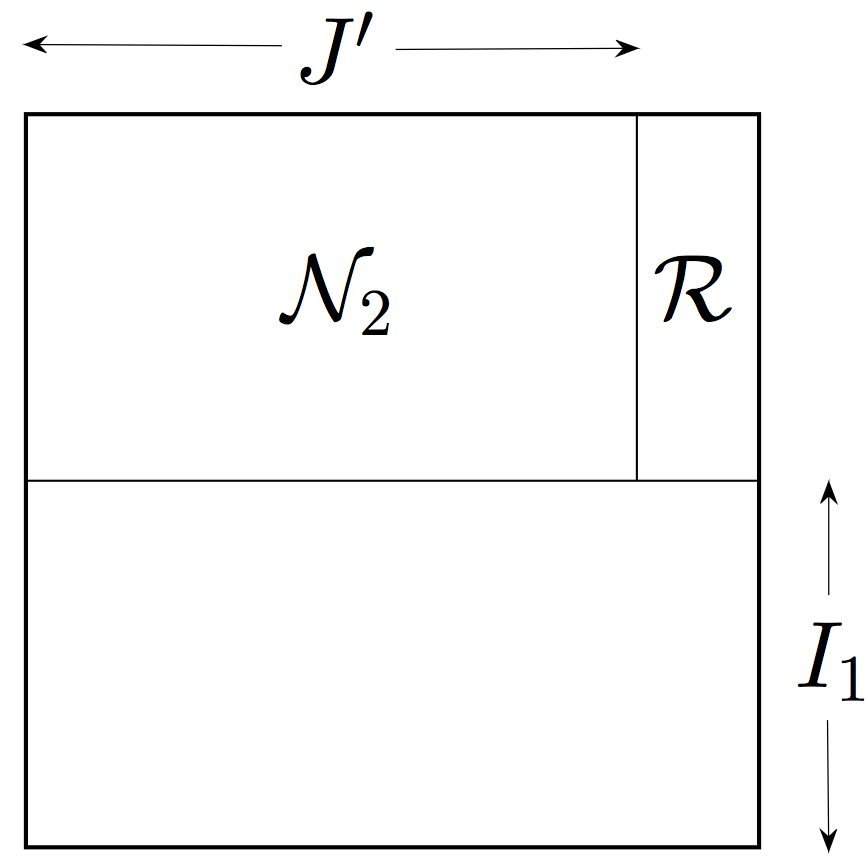}}
    \caption{Repeat for transpose.}
    \label{fig: block-decomposition-2}
  \end{subfigure}
  \quad
  \begin{subfigure}[b]{0.3\textwidth} \qquad
    \includegraphics[width=0.8\textwidth]{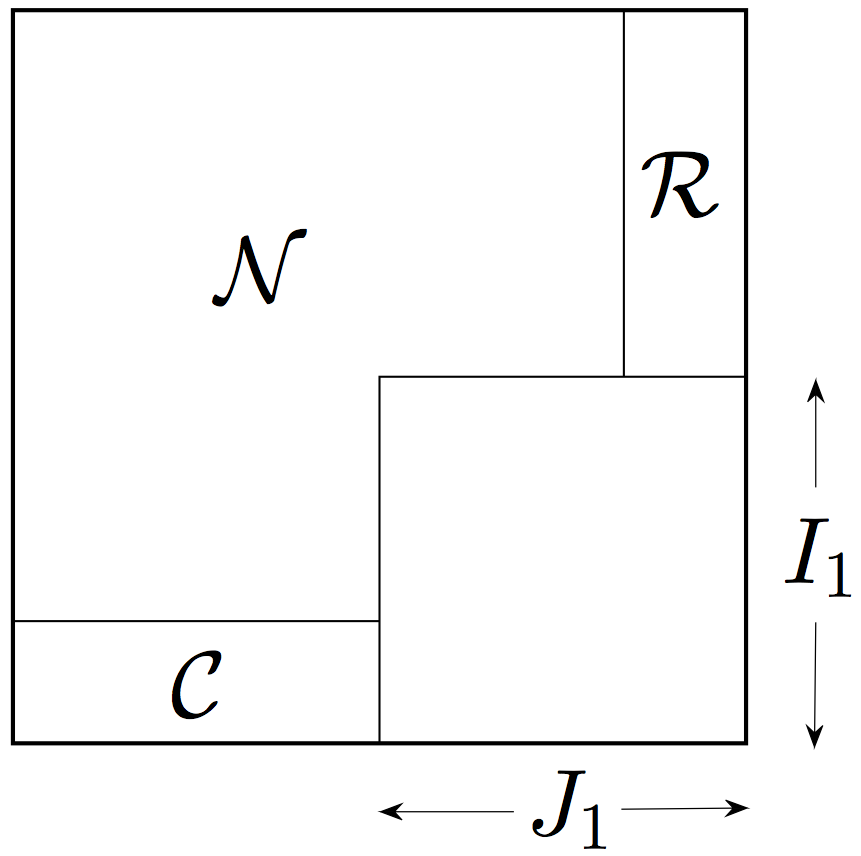}
    \caption{Final decomposition.}
    \label{fig: block-decomposition-3}
  \end{subfigure}
  \caption{Construction of a block decomposition in Lemma~\ref{lem: block decomposition}.}
  \label{fig: block-decomposition}	
\end{figure}

\begin{proof}
Since we are going to use Lemmas~\ref{lem: concentration in spectral norm}, \ref{lem: degrees of subgraphs}
and \ref{lem: more degrees of subgraphs}, let us fix realization of the random graph
that satisfies the conclusion of those three lemmas.

By Lemma~\ref{lem: degrees of subgraphs}, all but $m / \a d$ rows of $A_{I \times J}$
have at most $8r\a d$ ones; let us denote by $I'$ the set of indices of those rows with at most $8r\a d$ ones.
Then we can use Lemma~\ref{lem: concentration in spectral norm}
for the block $I' \times J$ and with $\a$ replaced by $8r\a$;
the choice of $I'$ ensures that all rows have small numbers of ones, as required in that lemma.
To control the rows outside $I'$, we may use Lemma~\ref{lem: more degrees of subgraphs}
for $(I \setminus I') \times J$; as we already noted, this block has at most $m/\a d$ rows as required
in that lemma. Intersecting the good sets of columns produced by those two lemmas,
we obtain a set of at most $m/2$ exceptional columns $J_1 \subset J$
such that the following holds.
  \begin{itemize}
    \item On the block $\NN_1 := I' \times (J \setminus J_1)$, we have
    $
    \|(A - \E A)_{\NN_1}\| \le C r^{3/2} \sqrt{\a d \log(en/m)}.
    $
    \item For the block $\CC := (I \setminus I') \times (J \setminus J_1)$,
    all columns of $A_\CC$ have at most $32r$ ones.
\end{itemize}
Figure~\ref{fig: block-decomposition-1} illustrates the decomposition of the block $I \times J$ into the set of
exceptional columns indexed by $J_1$ and good sets $\NN_1$ and $\CC$.

To finish the proof, we apply the above argument to the transpose $A^\tran$ on
the block $J \times I$. To be precise, we start with the set $J' \subset J$ of all but $m/\a d$ small columns
of $A_{I \times J}$ (those with at most $8r \a d$ ones); then we obtain an exceptional set
$I_1 \subset I$ of at most $m/2$ rows; and finally we conclude that $A$ concentrates
on the block $\NN_2 := (I \setminus I_1) \times J'$ and has small rows on the block
$\RR := (I \setminus I_1) \times (J \setminus J')$.
Figure~\ref{fig: block-decomposition-2} illustrates this decomposition.

It only remains to combine the decompositions for $A$ and $A^\tran$;
Figure~\ref{fig: block-decomposition-3} illustrates a result of the combination.
Once we define $\NN := \NN_1 \cup \NN_2$, it becomes clear that
$\NN$, $\RR$ and $\CC$ have the required properties.\footnote{It may happen that an entry
ends up in more than one class $\NN$, $\RR$ and $\CC$. In such cases, we split the tie arbitrarily.}
\end{proof}

\begin{proof}[Proof of Theorem~\ref{thm: decomposition}]
Let us fix a realization of the random graph that satisfies the conclusion of
Lemma~\ref{lem: block decomposition}.
Applying that lemma for $m=n$ and with $\a = 1$, we decompose the set of edges $[n] \times [n]$
into three classes $\NN_0$, $\CC_0$ and $\RR_0$ plus
an $n/2 \times n/2$ exceptional block $I_1 \times J_1$. Apply Lemma~\ref{lem: block decomposition}
again, this time for the block $I_1 \times J_1$, for $m=n/2$ and with $\a = \sqrt{1/2}$.
We decompose $I_1 \times J_1$ into $\NN_1$, $\CC_1$ and $\RR_1$ plus
an $n/4 \times n/4$ exceptional block $I_2 \times J_2$.

Repeat this process for $\a = \sqrt{m/n}$ where $m$ is the running size of the block;
we halve this size at each step, and so we have $\a_i \le 2^{-i/2}$.
Figure~\ref{fig: decomposition-iteration}	 illustrates a decomposition that we may obtain this way.
In a finite number of steps (actually, in $O(\log n)$ steps) the exceptional block becomes empty,
and the process terminates. At that point we have decomposed the set of edges $[n] \times [n]$
into $\NN$, $\RR$ and $\CC$, defined as the union of $\NN_i$, $\CC_i$ and $\RR_i$ respectively,
which we obtained at each step. It is clear that $\RR$ and $\CC$ satisfy the required properties.

It remains to bound the deviation of $A$ on $\NN$. By construction, $\NN_i$ satisfies
$$
\|(A - \E A)_{\NN_i}\| \le C r^{3/2} \sqrt{\a_i d \log(e\a_i)}.
$$
Thus, by triangle inequality we have
$$
\|(A - \E A)_\NN\| \le \sum_{i \ge 0} C r^{3/2} \sqrt{\a_i d \log(e\a_i)} \le C' r^{3/2} \sqrt{d}.
$$
In the second inequality we used that $\a_i \le 2^{-i/2}$, which forces the series to converge.
The proof of Theorem~\ref{thm: decomposition} is complete.
\end{proof}

\subsection{Replacing the degrees by the \texorpdfstring{$\ell_2$}{Lg} norms in Theorem~\ref{thm: main formal}}  \label{s: moreover}

Let us now prove the ``moreover'' part of Theorem~\ref{thm: main formal},
where $d'$ is the the maximal $\ell_2$ norm of the rows and columns of the
regularized adjacency matrix $A'$.
This is clearly a stronger statement than in the main part of the theorem.
Indeed, since all entries of $A'$ are bounded in absolute value by $1$,
each degree, being the $\ell_1$ norm of a row, is bounded below by the $\ell_2$ norm squared.

This strengthening is in fact easy to check. To do so, note that the definition of $d'$
was used only once in the proof of Theorem~\ref{thm: main formal}, namely in Step 2
where we bounded the norms of $A'_\RR$ and $A'_\CC$.
Thus, to obtain the strengthening, it is enough to replace the application of
Lemma~\ref{lem: norm bound with L1 condition} there by the following lemma.

\begin{lemma}
  Consider a matrix $B$ with entries in $[0,1]$.
  Suppose each row of $B$ has at most $a$ non-zero entries,
  and each column has $\ell_2$ norm at most $\sqrt{b}$.
  Then $\|B\| \le \sqrt{ab}$.
\end{lemma}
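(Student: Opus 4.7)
The plan is to mimic the proof of Lemma~\ref{lem: norm bound with L1 condition} almost verbatim, changing only the form of the Cauchy--Schwarz step so that it exploits the two hypotheses at hand (row-support size rather than row $\ell_1$ norm, and column $\ell_2$ norm rather than column $\ell_1$ norm).

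Concretely, I would fix a unit vector $x$ and bound $\|Bx\|_2^2 = \sum_i \big(\sum_j B_{ij} x_j\big)^2$. The trick is to insert the indicator of the support of the $i$-th row: write $B_{ij} x_j = \one_{\{B_{ij} \neq 0\}} \cdot B_{ij} x_j$ and then apply Cauchy--Schwarz to the inner sum with these two factors. This gives
$$
\Big(\sum_j B_{ij} x_j\Big)^2 \le \Big(\sum_j \one_{\{B_{ij}\neq 0\}}\Big) \Big(\sum_j B_{ij}^2 x_j^2\Big) \le a \sum_j B_{ij}^2 x_j^2,
$$
by the row-support hypothesis. Next I would swap the order of summation in $\sum_i \sum_j B_{ij}^2 x_j^2 = \sum_j x_j^2 \sum_i B_{ij}^2$ and apply the column $\ell_2$ bound $\sum_i B_{ij}^2 \le b$, leaving $ab \sum_j x_j^2 = ab$. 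Since $x$ was arbitrary, $\|B\| \le \sqrt{ab}$.

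I do not foresee any obstacle here: the argument is a one-line variant of the previous lemma, and the hypothesis that entries lie in $[0,1]$ is used only implicitly, in the sense that the ``row-support size'' plays the role formerly played by the row $\ell_1$ norm (and indeed, for a $\{0,1\}$-matrix these coincide, which is the only case actually needed for the application to $A'_\RR$ and $A'_\CC$ in Step~2 of the proof of Theorem~\ref{thm: main formal}). One could alternatively deduce the lemma by a Riesz--Thorin-style interpolation, but the direct Cauchy--Schwarz proof above seems shortest.
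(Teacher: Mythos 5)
Your proof is correct and follows essentially the same route as the paper's: a Cauchy--Schwarz bound on the inner sum, followed by an exchange of the order of summation. The only cosmetic difference is that you split the summand as $\one_{\{B_{ij}\ne 0\}}\cdot(B_{ij}x_j)$ and hence invoke the row-support hypothesis first and the column $\ell_2$ hypothesis second, whereas the paper (working equivalently with $B^\tran x$) splits as $B_{ij}\cdot x_i$ and uses the two hypotheses in the opposite order; both versions, like the paper's, never actually invoke the $[0,1]$ entry assumption.
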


\begin{proof}
To prove the claim, let $x$ be a vector with $\|x\|_2=1$. Using Cauchy-Schwarz inequality and the assumptions, we have
\begin{align*}
\|Bx\|_2^2
  &= \sum_j \Big( \sum_i B_{ij} x_i \Big)^2
  \le \sum_j \Big( \sum_{i:\, B_{ij} \ne 0} B_{ij}^2 \sum_{i:\, B_{ij} \neq 0} x_i^2 \Big) \\
  &\le \sum_j \Big( b \sum_{i:\, B_{ij} \neq 0} x_i^2 \Big)
 = b \sum_i x_i^2 \sum_{j:\, B_{ij} \ne 0} 1
 \le b \sum_i x_i^2 a = ab.
\end{align*}
Since $x$ is arbitrary, this completes the proof.
\end{proof}

\section{Concentration of the regularized Laplacian}				\label{s: Laplacian proof}

In this section, we state the following formal version of Theorem~\ref{thm: Laplacian informal},
and we deduce it from concentration of adjacency matrices (Theorem~\ref{thm: main formal}).

\begin{theorem}[Concentration of regularized Laplacians]  \label{thm: Laplacian formal}
  Consider a random graph from the inhomogeneous Erd\"os-R\'enyi model,
  and let $d$ be as in \eqref{eq: d}.
  Choose a number $\tau>0$.
  Then, for any $r \ge 1$, with probability at least $1-e^{-r}$ one has
  $$
  \|\LL(A_\tau) - \LL(\E A_\tau)\| \le \frac{Cr^2}{\sqrt{\tau}} \Big( 1 + \frac{d}{\tau} \Big)^{5/2}.
  $$
\end{theorem}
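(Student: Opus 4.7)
The plan is to reduce the Laplacian concentration to the adjacency concentration (Theorem~\ref{thm: main formal}) together with an auxiliary degree-concentration estimate, via a standard three-term decomposition. Writing $D_\tau$ for the degree matrix of $A_\tau$ and $\bar D_\tau$ for the degree matrix of $\E A_\tau$ (so $D_\tau = D + \tau I$ and $\bar D_\tau = \E D + \tau I$, where $D$ has diagonal entries $d_i = \sum_j A_{ij}$), and using the identity $A_\tau - \E A_\tau = A - \E A$, I would start from
\[
\LL(A_\tau) - \LL(\E A_\tau) \;=\; \bar D_\tau^{-1/2}\E A_\tau\, \bar D_\tau^{-1/2} \;-\; D_\tau^{-1/2} A_\tau\, D_\tau^{-1/2}
\]
and add/subtract $\bar D_\tau^{-1/2} A_\tau \bar D_\tau^{-1/2}$ to split the right-hand side into a ``centered'' piece $S_1 := \bar D_\tau^{-1/2}(\E A - A)\bar D_\tau^{-1/2}$ and a ``degree-perturbation'' piece
\[
S_2 \;:=\; (\bar D_\tau^{-1/2} - D_\tau^{-1/2}) A_\tau \bar D_\tau^{-1/2} \;+\; D_\tau^{-1/2} A_\tau (\bar D_\tau^{-1/2} - D_\tau^{-1/2}),
\]
which I would bound separately.

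For $S_1$, the deterministic factors $\bar D_\tau^{-1/2}$ have operator norm at most $1/\sqrt\tau$. Since $\|A - \E A\|$ itself is not controlled in the sparse regime, I would invoke Theorem~\ref{thm: main formal}: with high probability the high-degree subset $\mathcal H := \{i : d_i > 2d\}$ has cardinality at most $10n/d$, and the theorem furnishes a regularized matrix $A'$ (with all row degrees bounded by $2d$) satisfying $\|A' - \E A\| \le Cr^{3/2}\sqrt d$. Splitting $A - \E A = (A - A') + (A' - \E A)$ yields $\|\bar D_\tau^{-1/2}(A' - \E A)\bar D_\tau^{-1/2}\| \le Cr^{3/2}\sqrt d/\tau$, while the residual $\bar D_\tau^{-1/2}(A - A')\bar D_\tau^{-1/2}$, supported on edges incident to $\mathcal H$, is handled by Lemma~\ref{lem: norm bound with L1 condition}: after normalization the column $\ell_1$-norm is controlled using $|\mathcal H| \le 10n/d$, while the row $\ell_1$-norm is controlled using $1/\sqrt{\bar d_j + \tau} \le 1/\sqrt\tau$.

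For $S_2$, the crucial factor is $\|\bar D_\tau^{-1/2} - D_\tau^{-1/2}\|$. Applying the scalar identity $\frac{1}{\sqrt x} - \frac{1}{\sqrt y} = \frac{y - x}{\sqrt{xy}(\sqrt x + \sqrt y)}$ entrywise with $x = d_i + \tau$, $y = \bar d_i + \tau$, I reduce this to uniform control of $|d_i - \bar d_i|$. Each $d_i$ is a sum of $n$ independent Bernoulli variables with variance at most $d$, so Bernstein's inequality plus a union bound over $i \in [n]$ yields $\max_i |d_i - \bar d_i| \le Cr\sqrt d$ with the required probability, hence $\|\bar D_\tau^{-1/2} - D_\tau^{-1/2}\| \le Cr\sqrt d/\tau^{3/2}$. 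Combining this with $\|\bar D_\tau^{-1/2}\| \le 1/\sqrt\tau$ and $\|A_\tau\| \le \|\E A_\tau\| + \|A' - \E A\| + \|A - A'\| \lesssim d + \tau$ (using Theorem~\ref{thm: main formal} and a second application of Lemma~\ref{lem: norm bound with L1 condition} for the residual), the triple-product estimates for the two halves of $S_2$ multiply to $Cr^2(1 + d/\tau)^{5/2}/\sqrt\tau$ after elementary algebra; the compounded $(1+d/\tau)$ factors come from each appearance of $A_\tau$ (of norm $\asymp d + \tau$) and of $\sqrt d/\tau$ (from the difference of the normalizers).

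The main obstacle is handling the residual $A - A'$ that appears both inside $S_1$ and in the bound on $\|A_\tau\|$ used for $S_2$: naïve spectral bounds fail because excess degrees on $\mathcal H$ can be of order $n$, and this term is not directly covered by Theorem~\ref{thm: main formal}. The remedy in each case is Lemma~\ref{lem: norm bound with L1 condition}, which exploits the fact that at most $|\mathcal H| \le 10n/d$ columns of $A - A'$ are affected outside $\mathcal H \times \mathcal H$, combined with the uniform lower bound $\bar d_j + \tau \ge \tau$ on the normalizer. Once the regularized and residual contributions of $S_1$, and of each half of $S_2$, are summed with care, the resulting estimates collapse into the claimed bound $(Cr^2/\sqrt\tau)(1 + d/\tau)^{5/2}$.
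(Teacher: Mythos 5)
Your decomposition is genuinely different from the paper's, and I believe it fails in the sparse regime: the choice of \emph{which} degree matrix to place where is not cosmetic but the core technical device of the proof.

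Concretely, your centered piece $S_1 = \bar D_\tau^{-1/2}(\E A - A)\bar D_\tau^{-1/2}$ carries the \emph{deterministic} normalizer outside. Its entries $1/\sqrt{\bar d_i+\tau}$ are all at least $1/\sqrt{d+\tau}$ (since $\bar d_i \le d$), so it provides no degree-adaptive suppression of abnormal rows. You try to compensate by splitting $A-\E A = (A-A')+(A'-\E A)$ and bounding $\|A-A'\|$ via Lemma~\ref{lem: norm bound with L1 condition}, but this cannot work: if $i$ is a high-degree vertex (with $d\ll\log n$ the maximal degree is $\sim\log n/\log\log n$), row $i$ of $A-A'$ has about $d_i-2d$ ones, so $\|A-A'\|\ge\sqrt{d_i-2d}$, which is unbounded in terms of $d$; the flat factor $1/\tau$ coming from the two $\bar D_\tau^{-1/2}$'s does not see the abnormal degree of vertex $i$ and cannot fix this. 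The same phenomenon breaks both ingredients of your $S_2$ bound: the claim $\max_i|d_i-\bar d_i|\lesssim r\sqrt d$ is false for $d=O(1)$ (it is $\sim\log n/\log\log n$, and Bernstein with a union bound over $n$ vertices necessarily costs a $\log n$), and $\|A_\tau\|\lesssim d+\tau$ is likewise false because $\|A\|\ge\sqrt{\max_i d_i}$.

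The paper's decomposition $E=S+T$ with $S=D_\tau^{-1/2}(A-\bar A)D_\tau^{-1/2}$ (the \emph{random} $D_\tau$ outside) and $T=D_\tau^{-1/2}\bar A_\tau D_\tau^{-1/2}-\bar D_\tau^{-1/2}\bar A_\tau\bar D_\tau^{-1/2}$ (the \emph{deterministic} $\bar A_\tau$ inside) is engineered to sidestep both problems. The random $D_\tau^{-1/2}$ around $A-\bar A$ supplies a degree-adaptive factor $1/\sqrt{d_i+\tau}$ on row $i$; the paper exploits this by comparing $D_\tau$ to a diagonal $\Delta$ with $\Delta_{ii}=d_i/\tau+1$ on high-degree rows, so that $A':=\Delta^{-1/2}A\Delta^{-1/2}$ has rows of $\ell_2$ norm $O(\sqrt{\max(rd,\tau)})$, placing it within reach of the ``moreover'' clause of Theorem~\ref{thm: main formal}. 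And by keeping the deterministic $\bar A_\tau$ (entries $\le(d+\tau)/n$) sandwiched inside $T$, the paper can pass to the Hilbert--Schmidt norm and only needs the \emph{summed} bound $\sum_j(d_j-\bar d_j)^2\lesssim r^2nd$ rather than a uniform bound on $\max_i|d_i-\bar d_i|$. Your symmetric decomposition has neither of these escape hatches, and without them the estimates do not close in the sparse regime.
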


\begin{proof}
Two sources contribute to the deviation of Laplacian -- the deviation of the adjacency matrix
and the deviation of the degrees. Let us separate and bound them individually.

\medskip

{\bf Step 1. Decomposing the deviation.}
Let us denote $\bar{A} := \E A$ for simplicity; then
$$
E := \LL(A_\tau) - \LL(\bar{A}_\tau)
= D_\tau^{-1/2} A_\tau D_\tau^{-1/2} - \bar{D}_\tau^{-1/2} \bar{A}_\tau \bar{D}_\tau^{-1/2}.
$$
Here $D_\tau = \diag(d_i + \tau)$ and $\bar{D}_\tau = \diag(\bar{d}_i + \tau)$
are the diagonal matrices with degrees of $A_\tau$ and $\bar{A}_\tau$ on the diagonal, respectively.
Using the fact that $A_\tau-\bar{A}_\tau = A - \bar{A}$, we can represent the
deviation as
$$
E = S + T
\quad \text{where} \quad
S = D_\tau^{-1/2} (A-\bar{A}) D_\tau^{-1/2}, \quad
T =D_\tau^{-1/2} \bar{A}_\tau D_\tau^{-1/2} - \bar{D}_\tau^{-1/2} \bar{A}_\tau \bar{D}_\tau^{-1/2}.
$$
Let us bound $S$ and $T$ separately.

\medskip

{\bf Step 2. Bounding $S$.} Let us introduce a diagonal matrix $\Delta$ that should be easier to work with
than $D_\tau$. Set $\Delta_{ii} = 1$ if $d_i \le 8rd$ and $\Delta_{ii} = d_i/\tau + 1$ otherwise.
Then entries of $\tau \Delta$ are upper bounded by the corresponding entries of $D_\tau$, and so
$$
\tau \|S\| \le \|\Delta^{-1/2} (A-\bar{A}) \Delta^{-1/2}\|.
$$
Next, by triangle inequality,
\begin{equation}         \label{eq: S R1 R2}
\tau \|S\| \le \| \Delta^{-1/2} A \Delta^{-1/2} - \bar{A}\| + \| \bar{A} - \Delta^{-1/2} \bar{A} \Delta^{-1/2}\|
=: R_1 + R_2.
\end{equation}

In order to bound $R_1$, we use Theorem~\ref{thm: main formal} to show that $A' := \Delta^{-1/2} A \Delta^{-1/2}$ concentrates around $\bar{A}$. This should be possible because $A'$ is
obtained from $A$ by reducing the degrees that are bigger than $8rd$.
To apply the ``moreover'' part of  Theorem~\ref{thm: main formal}, let us check the magnitude
of the $\ell_2$ norms of the rows $A_i^\prime$ of $A'$:
$$
\|A'_i\|_2^2 = \sum_{j=1}^n \frac{A_{ij}}{\Delta_{ii} \Delta_{jj}} \le \frac{d_i}{\Delta_{ii}}
\le \max(8rd, \, \tau).
$$
Here in the first inequality we used that $\Delta_{jj} \ge 1$ and $\sum_j A_{ij} = d_i$;
the second inequality follows by definition of $\Delta_{ii}$.
Applying Theorem~\ref{thm: main formal}, we obtain with probability $1-n^{-r}$ that
$$
R_1 = \|A^\prime - \bar{A}\| \le C_1 r^2 (\sqrt{d}+\sqrt{\tau}).
$$

To bound $R_2$, we note that by construction of $\Delta$, the matrices
$\bar{A}$ and $\Delta^{-1/2} \bar{A} \Delta^{-1/2}$ coincide on the block $I \times I$, where
$I$ is the set of vertices satisfying $d_i \le 8rd$. This block is very large -- indeed,
Lemma~\ref{lem: degrees of subgraphs} implies that $|I^c| \le n/d$ with probability $1-n^{-r}$.
Outside this block, i.e. on the small blocks $I^c \times [n]$ and $[n] \times I^c$,
the entries of $\bar{A} - \Delta^{-1/2} \bar{A} \Delta^{-1/2}$ are bounded by the corresponding
entries of $\bar{A}$, which are all bounded by $d/n$.
Thus, using Lemma~\ref{lem: norm bound with L1 condition}, we have
$$
R_2 \le \|\bar{A}_{I^c \times [n]}\| + \|\bar{A}_{[n] \times I^c}\| \le 2\sqrt{d}.
$$
Substituting the bounds for $R_1$ and $R_2$ into \eqref{eq: S R1 R2}, we conclude that
$$
\|S\| \le \frac{C_2 r^2}{\tau} (\sqrt{d} + \sqrt{\tau})
$$
with probability at least $1 - 2n^{-r}$.

\medskip

{\bf Step 3. Bounding $T$.}
Bounding the spectral norm by the Hilbert-Schmidt norm, we get
$$
\|T\| \le \|T\|_{\mathrm{HS}} = \sum_{i,j=1}^n T_{ij}^2,
\quad \text{where} \quad
T_{ij} = (\bar{A}_{ij}+\tau/n) \Big[ 1/\sqrt{\delta_{ij}} - 1/{\sqrt{\bar{\d}_{ij}}} \Big]
$$
and $\d_{ij} = (d_i+\tau)(d_j+\tau)$ and $\bar{\d}_{ij} = (\bar{d}_i+\tau)(\bar{d}_j+\tau)$.
To bound $T_{ij}$, we note that
$$
0 \le \bar{A}_{ij}+\tau/n \le \frac{d+\tau}{n}
\quad \text{and} \quad
\big| 1/\sqrt{\delta_{ij}} - 1/{\sqrt{\bar{\d}_{ij}}} \big|
= \left| \frac{\d_{ij} - \bar{\d}_{ij}}{\d_{ij} \sqrt{\bar{\d}_{ij}} + \bar{\d}_{ij} \sqrt{\d_{ij}}} \right|
\ge \frac{|\d_{ij} - \bar{\d}_{ij}|}{2\tau^3}.
$$
Recalling the definition of $\d_{ij}$ and $\bar{\d}_{ij}$ and
adding and subtracting $(d_i+\tau)(\bar{d}_j+\tau)$, we have
$$
\d_{ij} - \bar{\d}_{ij} = (d_i+\tau) (d_j - \bar{d}_j) + (\bar{d}_j+\tau) (d_i - \bar{d}_i).
$$
So, using the inequality $(a+b)^2 \le 2(a^2+b^2)$ and bounding $\bar{d}_j + \tau$ by $d+\tau$, we obtain
\begin{equation}         \label{eq: T2}
\|T\|^2
\le \frac{(d+\tau)^2}{n^2\tau^6}
  \Big[ \sum_{i=1}^n (d_i+\tau)^2 \sum_{j=1}^n (d_j - \bar{d}_j)^2
    + n(d+\tau)^2 \sum_{i=1}^n (d_i - \bar{d}_i)^2 \Big].
\end{equation}

We claim that
\begin{equation}         \label{eq: sum variances}
\sum_{j=1}^n (d_j - \bar{d}_j)^2 \le C_3 r^2 n d \quad \text{with probability } 1-e^{-2r}.
\end{equation}
Indeed, since the variance of each $d_i$ is bounded by $d$, the expectation of the
sum in \eqref{eq: sum variances} is bounded by $nd$. To upgrade the variance bound to an exponential
deviation bound, one can use one of the several standard methods. For example,
Bernstein's inequality implies that $X_i = d_j - \bar{d_j}$ satisfies
$\Pr{ X_i > C_4 t \sqrt{d} } \le e^{-t}$ for all $t \ge 1$. This means that the random variable $X_i^2$ belongs
to the Orlicz space $L_{\psi_{1/2}}$ and has norm $\|X_i^2\|_{\psi_{1/2}} \le C_3 d$,
see \cite{Ledoux&Talagrand1991}.
By triangle inequality, we conclude that $\|\sum_{i=1}^n X_i^2\|_{\psi_{1/2}} \le C_3 n d$,
which in turn implies \eqref{eq: sum variances}.

Furthermore, \eqref{eq: sum variances} implies
$$
\sum_{i=1}^n (d_i+\tau)^2
\le 2\sum_{i=1}^n (d_i-\bar{d}_i)^2 + 2\sum_{i=1}^n  (\bar{d}_i+\tau)^2
\le 2C_3 r^2 nd + 2n(d+\tau)^2 \le C_5 r^2 n (d+\tau)^2.
$$
Substituting this bound and \eqref{eq: sum variances} into \eqref{eq: T2} we conclude that
$$
\|T\|^2 \le \frac{(d+\tau)^2}{n^2\tau^6} \cdot C_3 r^2 nd \, \Big[ C_5 r^2 n (d+\tau)^2 + n(d+\tau)^2 \Big]
\le \frac{C_6 r^4}{\tau} \Big( 1 + \frac{d}{\tau} \Big)^5.
$$
It remains to substitute the bounds for $S$ and $T$ into the inequality
$\|E\| \le \|S\|+\|T\|$, and simplify the expression. The resulting bound holds with probability
at least $1-n^{-r}-n^{-r}-e^{-2r} \ge 1-e^{-r}$, as claimed.
\end{proof}

\section{Further questions}				\label{s: questions}

\subsection{Optimal regularization?}

The main point of our paper was that regularization helps sparse graphs to concentrate.
We have discussed several kinds of regularization in Section~\ref{s: partial cases} and
mentioned some more in Section~\ref{s: partial cases}. We found that any meaningful
regularization works, as long as it reduces the too high degrees and increases the too low degrees.
Is there an optimal way to regularize a graph? Designing the best ``preprocessing''
of sparse graphs for spectral algorithms is especially interesting
from the applied perspective, i.e. for real world networks.

On the theoretical level, can regularization of sparse graphs produce the same optimal bound
$2 \sqrt{d}(1+o(1))$ that we saw for dense graphs in \eqref{eq: concentration dense ER}?
Thus, an ideal regularization should bring all parasitic outliers of the spectrum into the bulk.
If so, this would lead to a potentially simple spectral clustering algorithm for community detection
in networks which matches the theoretical lower bounds. Algorithms with optimal rates exist for this problem
\cite{Mossel&Neeman&Sly2014, Massoulie:2014}, but their analysis is very technical.

\subsection{How exactly concentration depends on regularization?}
It would be interesting to determine how exactly the concentration
of Laplacian depends on the regularization parameter $\tau$. The dependence
in Theorem~\ref{thm: Laplacian formal} is not optimal, and we have not made efforts to
improve it. Although it is natural to choose $\tau \sim d$ as in Theorem~\ref{thm: Laplacian informal},
choosing $\tau \gg d$ could also be useful \cite{Joseph&Yu2013}.
Choosing $\tau \ll d$ may be interesting as well, for then $\LL(\E A_\tau) \approx \LL(\E A)$
and we obtain the concentration of $\LL(A_\tau)$ around the Laplacian of the expectation of the original
(rather than regularized) matrix $\E A$.

\subsection{Average expected degree?}

Both concentration results of this paper, Theorems~\ref{thm: main informal} and \ref{thm: Laplacian informal},
depend on $d = \max_{ij} np_{ij}$. Would it be possible to reduce $d$ to the maximal expected degree
$d_{ave} = \max_i \sum_j p_{ij}$?

\subsection{From random graphs to random matrices?}			

Adjacency matrices of random graphs are particular examples of random matrices.
Does the phenomenon we described, namely that regularization leads to concentration,
apply for general random matrices?
Guided by Theorem~\ref{thm: main informal}, we might expect
the following for a broader general class of random matrices $B$ with mean zero independent entries.
First, the only reason the spectral norm of $B$ is too large (and that it is determined by outliers of spectrum)
could be the existence of a large row or column. Furthermore, it might be possible to reduce the norm of
$B$ (and thus bring the outliers into the bulk of spectrum) by regularizing in some way
the rows and columns that are too large.
For related questions in random matrix theory, see the recent work \cite{Bandeira&Handel2014, Handel2015}.

\bibliography{allref}

\begin{thebibliography}{10}

\bibitem{Abbe&Bandeira&Hall2014}
E.~Abbe, A.~S. Bandeira, and G.~Hall.
\newblock Exact recovery in the stochastic block model.
\newblock {\em IEEE Transactions on Information Theory}, 62(1):471--487, 2016.

\bibitem{Alon&Kahale1997}
N.~Alon and N.~Kahale.
\newblock A spectral technique for coloring random 3-colorable graphs.
\newblock {\em SIAM J. Comput.}, (26):1733--1748, 1997.

\bibitem{amini2013pseudo}
A.~A. Amini, A.~Chen, P.~J. Bickel, and E.~Levina.
\newblock Pseudo-likelihood methods for community detection in large sparse
  networks.
\newblock {\em The Annals of Statistics}, 41(4):2097--2122, 2013.

\bibitem{Bandeira&Handel2014}
A.~Bandeira and R.~V. Handel.
\newblock Sharp nonasymptotic bounds on the norm of random matrices with
  independent entries.
\newblock {\em Annals of Probability, to appear}, 2014.

\bibitem{Bhatia1996}
R.~Bhatia.
\newblock {\em Matrix Analysis}.
\newblock Springer-Verlag New York, 1996.

\bibitem{Bickel&Chen2009}
P.~J. Bickel and A.~Chen.
\newblock A nonparametric view of network models and {N}ewman-{G}irvan and
  other modularities.
\newblock {\em Proc. Natl. Acad. Sci. USA}, 106:21068--21073, 2009.

\bibitem{Bollobas2007}
B.~Bollobas, S.~Janson, and O.~Riordan.
\newblock The phase transition in inhomogeneous random graphs.
\newblock {\em Random Structures and Algorithms}, 31:3--122, 2007.

\bibitem{Bordenave.et.al2015non-backtracking}
C.~Bordenave, M.~Lelarge, and L.~Massouli\'{e}.
\newblock Non-backtracking spectrum of random graphs: community detection and
  non-regular {R}amanujan graphs.
\newblock {\em arxiv:1501.06087}, 2015.

\bibitem{Boucheron&Lugosi&Massart2013}
S.~Boucheron, G.~Lugosi, and P.~Massart.
\newblock {\em Concentration inequalities: a nonasymptotic theory of
  independence}.
\newblock Oxford University Press, 2013.

\bibitem{Cai&Li2015}
T.~Cai and X.~Li.
\newblock Robust and computationally feasible community detection in the
  presence of arbitrary outlier nodes.
\newblock {\em Ann. Statist.}, 43(3):1027--1059, 2015.

\bibitem{Chaudhuri&Chung&Tsiatas2012}
K.~Chaudhuri, F.~Chung, and A.~Tsiatas.
\newblock Spectral clustering of graphs with general degrees in the extended
  planted partition model.
\newblock {\em Journal of Machine Learning Research Workshop and Conference
  Proceedings}, 23:35.1 -- 35.23, 2012.

\bibitem{Chin&Rao&Vu2015}
P.~Chin, A.~Rao, and V.~Vu.
\newblock Stochastic block model and community detection in the sparse graphs :
  A spectral algorithm with optimal rate of recovery.
\newblock {\em arXiv:1501.05021}, 2015.

\bibitem{ChungFan1997}
F.~R.~K. Chung.
\newblock {\em Spectral Graph Theory}.
\newblock CBMS Regional Conference Series in Mathematics, 1997.

\bibitem{Decelle.et.al.2011}
A.~Decelle, F.~Krzakala, C.~Moore, and L.~Zdeborov\'{a}.
\newblock Asymptotic analysis of the stochastic block model for modular
  networks and its algorithmic applications.
\newblock {\em Physical Review E}, 84:066106, 2011.

\bibitem{FeiOfe05}
U.~Feige and .~Ofek.
\newblock Spectral techniques applied to sparse random graphs.
\newblock {\em Wiley InterScience}, 2005.

\bibitem{FurKom80}
Z.~Füredi and J.~Komlós.
\newblock The eigenvalues of random symmetric matrices.
\newblock {\em Combinatorica}, 1:3:233--241, 1980.

\bibitem{Friedman&Kahn&Szemeredi1989}
J.~Friedman, J.~Kahn, and E.~Szemeredi.
\newblock On the second eigenvalue in random regular graphs.
\newblock {\em Proc Twenty First Annu ACMSymp Theory of Computing}, pages
  587--598, 1989.

\bibitem{Gao&Ma&Zhang&Zhou2015}
C.~Gao, Z.~Ma, A.~Y. Zhang, and H.~H. Zhou.
\newblock Achieving optimal misclassification proportion in stochastic block
  model.
\newblock {\em arXiv:1505.03772}, 2015.

\bibitem{Guedon&Vershynin2014}
O.~Gu\'{e}don and R.~Vershynin.
\newblock Community detection in sparse networks via grothendieck's inequality.
\newblock {\em Probability Theory and Related Fields, to appear}, 2014.

\bibitem{Hajek&Wu&Xu2014}
B.~Hajek, Y.~Wu, and J.~Xu.
\newblock Achieving exact cluster recovery threshold via semidefinite
  programming.
\newblock {\em arXiv:1412.6156}, 2014.

\bibitem{Handel2015}
R.~V. Handel.
\newblock On the spectral norm of inhomogeneous random matrices.
\newblock {\em arXiv:1502.05003}, 2015.

\bibitem{Holland83}
P.~W. Holland, K.~B. Laskey, and S.~Leinhardt.
\newblock Stochastic blockmodels: first steps.
\newblock {\em Social Networks}, 5(2):109--137, 1983.

\bibitem{Joseph&Yu2013}
A.~Joseph and B.~Yu.
\newblock Impact of regularization on spectral clustering.
\newblock {\em Ann. Statist.}, 44(4):1765--1791, 2016.

\bibitem{Krivelevich&Sudakov2003}
M.~Krivelevich and B.~Sudakov.
\newblock The largest eigenvalue of sparse random graphs.
\newblock {\em Combin Probab Comput}, 12:61--72, 2003.

\bibitem{ledoux2001}
M.~Ledoux.
\newblock {\em The Concentration of Measure Phenomenon}, volume~89 of {\em
  Mathematical Surveys and Monographs}.
\newblock Amer. Math. Society, 2001.

\bibitem{Ledoux&Talagrand1991}
M.~Ledoux and M.~Talagrand.
\newblock {\em Probability in Banach spaces: Isoperimetry and processes}.
\newblock Springer-Verlag, Berlin, 1991.

\bibitem{Lei&Rinaldo2013}
J.~Lei and A.~Rinaldo.
\newblock Consistency of spectral clustering in stochastic block models.
\newblock {\em Ann. Statist.}, 43(1):215--237, 2015.

\bibitem{Lu&Peng2013}
L.~Lu and X.~Peng.
\newblock Spectra of edge-independent random graphs.
\newblock {\em The electronic journal of combinatorics}, 20(4), 2013.

\bibitem{Massoulie:2014}
L.~Massouli{\'e}.
\newblock Community detection thresholds and the weak {R}amanujan property.
\newblock In {\em Proceedings of the 46th Annual ACM Symposium on Theory of
  Computing}, STOC '14, pages 694--703, 2014.

\bibitem{McS01}
McSherry.
\newblock Spectral partitioning of random graphs.
\newblock {\em Proc. 42nd FOCS}, pages 529--537, 2001.

\bibitem{Montanari&Sen2015}
A.~Montanari and S.~Sen.
\newblock Semidefinite programs on sparse random graphs and their application
  to community detection.
\newblock {\em arXiv:1504.05910}, 2015.

\bibitem{Mossel&Neeman&SlyOnConsistencyThresholds2014}
E.~Mossel, J.~Neeman, and A.~Sly.
\newblock Consistency thresholds for binary symmetric block models.
\newblock {\em arXiv:1407.1591}, 2014.

\bibitem{Mossel&Neeman&Sly2014}
E.~Mossel, J.~Neeman, and A.~Sly.
\newblock A proof of the block model threshold conjecture.
\newblock {\em arXiv:1311.4115}, 2014.

\bibitem{Mossel&Neeman&Sly2014a}
E.~Mossel, J.~Neeman, and A.~Sly.
\newblock Reconstruction and estimation in the planted partition model.
\newblock {\em Probability Theory and Related Fields}, 2014.

\bibitem{Oliveira2010}
R.~Oliveira.
\newblock Concentration of the adjacency matrix and of the laplacian in random
  graphs with independent edges.
\newblock {\em arXiv:0911.0600}, 2010.

\bibitem{Pietsch1978}
A.~Pietsch.
\newblock {\em Operator Ideals}.
\newblock North-Holland Amsterdam, 1978.

\bibitem{Pisier1986}
G.~Pisier.
\newblock {\em Factorization of linear operators and geometry of Banach
  spaces}.
\newblock Number 60 in CBMS Regional Conference Series in Mathematics. AMS,
  Providence, 1986.

\bibitem{Pisier2012}
G.~Pisier.
\newblock Grothendieck’s theorem, past and present.
\newblock {\em Bulletin (New Series) of the American Mathematical Society},
  49(2):237--323, 2012.

\bibitem{Qin&Rohe2013}
T.~Qin and K.~Rohe.
\newblock Regularized spectral clustering under the degree-corrected stochastic
  blockmodel.
\newblock In {\em Advances in Neural Information Processing Systems}, pages
  3120--3128, 2013.

\bibitem{Stein&Shakarchi2011}
E.~M. Stein and R.~Shakarchi.
\newblock {\em Functional Analysis: Introduction to Further Topics in
  Analysis}.
\newblock Princeton University Press, 2011.

\bibitem{Tomczak-Jaegermann1989}
N.~Tomczak-Jaegermann.
\newblock {\em Banach-Mazur distances and finite-dimensional operator ideals}.
\newblock John Wiley \& Sons, Inc., New York, 1989.

\bibitem{Tropp2009}
J.~A. Tropp.
\newblock Column subset selection, matrix factorization, and eigenvalue
  optimization.
\newblock {\em Proceedings of the Twentieth Annual ACM-SIAM Symposium on
  Discrete Algorithms}, pages 978--986, 2009.

\bibitem{v-rmt-tutorial}
R.~Vershynin.
\newblock Introduction to the non-asymptotic analysis of random matrices.
\newblock In Y.~Eldar and G.~Kutyniok, editors, {\em Compressed sensing: theory
  and applications}. Cambridge University Press.
\newblock Submitted.

\bibitem{Vu2007}
V.~Vu.
\newblock Spectral norm of random matrices.
\newblock {\em Combinatorica}, 27(6):721--736, 2007.

\end{thebibliography}
\bibliographystyle{abbrv}

\end{document}